\def\qed{\hfill\ifhmode\unskip\nobreak\fi\quad\ifmmode\Box\else\hfill$\Box$\fi}
\def\ite#1{\hfill\break${}$\hbox to 50pt {\quad(#1)\hfill}}
\newtheorem{thm}{Theorem}[section]
\newtheorem{cor}[thm]{Corollary}
\newtheorem{lem}[thm]{Lemma}
\newtheorem{claim}[thm]{Claim}
\def\cA{{\mathcal A}}
\def\cB{{\mathcal B}}
\def\cC{{\mathcal C}}
\def\cF{{\mathcal F}}
\def\cG{{\mathcal G}}
\def\cH{{\mathcal H}}
\def\cL{{\mathcal L}}
\def\cN{{\mathcal N}}
\newcommand{\minN}{70}
\begin{document}

\title{\vspace{-0.5in}
 Berge Pancyclic hypergraphs}

\author{
{{Teegan Bailey}}\thanks{
\footnotesize {Email: {\tt teeganb@email.sc.edu}.
}}
\and{{Yupei Li}}\thanks{
\footnotesize {Email: {\tt yupei@email.sc.edu}.
}}
\and{{Ruth Luo}}\thanks{
\footnotesize {E-mail: {\tt ruthluo@sc.edu}. University of South Carolina, Columbia, SC 29208.
}}}
\date{\today}

\maketitle

\vspace{-0.3in}

\begin{abstract}
A Berge cycle of length $\ell$ in a hypergraph is an alternating sequence of $\ell$ distinct vertices and $\ell$ distinct edges $v_1,e_1,v_2, \ldots, v_\ell, e_{\ell}$ such that $\{v_i, v_{i+1}\} \subseteq e_i$ for all $i$, with indices taken modulo $\ell$.
We call an $n$-vertex hypergraph pancyclic if it contains Berge cycles of every length from $3$ to $n$. We prove a sharp Dirac-type result guaranteeing pancyclicity in uniform hypergraphs: for $n \geq \minN$, $3 \leq r \leq \lfloor (n-1)/2\rfloor - 2$, if $\cH$ is an $n$-vertex, $r$-uniform hypergraph with minimum degree at least ${\lfloor (n-1)/2 \rfloor \choose r-1} + 1$, then $\cH$ is pancyclic.

\medskip\noindent
{\bf{Mathematics Subject Classification:}} 05C65, 05C35, 05C38.\\
{\bf{Keywords:}} extremal hypergraph theory, cycles and paths, pancyclic.
\end{abstract}

\section{Introduction}
We say a graph $G$ is \textbf{hamiltonian} if $G$ contains a cycle $C$ such that $V(C) = V(G)$, i.e. $G$ contains a cycle that spans the vertices of $G$. The \textbf{circumference} of $G$, denoted $c(G)$, is length of the longest cycle contained in $G$. Hence if $G$ is an $n$-vertex hamiltonian graph, then $c(G) = n$. Hamiltonian graphs forms one of the most well studied areas of graph theory and has given us several celebrated results such as such as the following result from Dirac.

\begin{thm}[Dirac~\cite{Dirac}]\label{D} Let $G$ be an $n$-vertex graph with minimum degree $\delta(G)$. If $\delta(G) \geq \frac n2$, then $G$ is hamiltonian. If $G$ is 2-connected, then $c(G) \geq \min\{n, 2\delta(G)\}$.
\end{thm}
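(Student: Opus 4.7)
The plan is to prove both statements by analyzing a longest path $P = v_0 v_1 \cdots v_k$ in $G$. Maximality of $P$ forces $N(v_0) \cup N(v_k) \subseteq V(P)$. Set $S = \{i : 0 \leq i \leq k-1,\ v_0 v_{i+1} \in E(G)\}$ and $T = \{i : 0 \leq i \leq k-1,\ v_i v_k \in E(G)\}$; then $|S|, |T| \geq \delta(G)$, and any $i \in S \cap T$ produces a cycle $C = v_0 v_1 \cdots v_i v_k v_{k-1} \cdots v_{i+1} v_0$ of length $k + 1$ spanning $V(P)$.

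For the hamiltonicity assertion, the inequalities $k + 1 \leq n \leq 2\delta(G)$ force $|S| + |T| \geq n > k \geq |S \cup T|$, so $S \cap T \neq \emptyset$ and the cycle $C$ exists. Since $\delta \geq n/2$ implies $G$ is connected (any two non-adjacent vertices would share a common neighbor, as $|N(u)| + |N(v)| > n-2$), any vertex $u \notin V(C)$ would have a neighbor on $V(C)$; splicing $u$ onto $C$ would then produce a path of length $k+1$, contradicting the maximality of $P$. Hence $V(C) = V(G)$ and $C$ is a Hamilton cycle.

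For the circumference bound, if $S \cap T \neq \emptyset$ the identical argument (using 2-connectivity for the extension step) yields a Hamilton cycle, and $c(G) = n$. Otherwise $S \cap T = \emptyset$ gives $|S| + |T| \leq k$, so $k \geq 2\delta(G)$; the path $P$ then has at least $2\delta + 1$ vertices. In this case I would aim to leverage 2-connectivity as follows: since $v_0 v_k \notin E(G)$ (else $0 \in S \cap T$), Menger's theorem supplies two internally disjoint $v_0$-$v_k$ paths, and one would hope to realize $P$ as one of them, so that the second path $P'$ has its internal vertices in $V(G) \setminus V(P)$ and $P \cup P'$ is a cycle of length $\geq k + 2 \geq 2\delta + 2$.

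The main obstacle is that $P$ need not participate in a Menger-disjoint pair; for instance, the Hamilton path in $K_{2,3}$ cannot be paired with another internally disjoint $v_0$-$v_k$ path. The robust fix is to argue directly with a longest cycle $C$ of length $c$: assume for contradiction $c < \min\{n, 2\delta\}$, pick $u \in V(G) \setminus V(C)$, and use 2-connectivity to obtain two internally disjoint $u$-to-$V(C)$ paths landing at distinct $x_1, x_2 \in V(C)$. Comparing the two arcs of $C$ between $x_1, x_2$ with the detour through $u$, and using $\deg(u) \geq \delta$ to force many neighbors of $u$ either on $V(C)$ or within the component of $G - V(C)$ containing $u$, one either reroutes $C$ into a strictly longer cycle—contradicting maximality—or concludes $c \geq 2\delta$. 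This arc-balancing argument is the technical heart of the second statement.
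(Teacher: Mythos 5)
The paper does not prove Theorem~\ref{D}; it cites Dirac's original result, so there is no in-paper argument to compare against. Evaluating your proposal on its own terms:

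Your proof of the first assertion ($\delta(G)\geq n/2$ implies hamiltonicity) is correct and is the standard longest-path argument: with $P=v_0\cdots v_k$ longest, $S,T\subseteq\{0,\dots,k-1\}$, $|S|+|T|\geq 2\delta\geq n>k\geq|S\cup T|$ forces $S\cap T\neq\emptyset$, giving a cycle $C$ on $V(P)$; then $\delta\geq n/2$ makes $G$ connected, so $V(C)\subsetneq V(G)$ would let you absorb a neighbouring outside vertex into a longer path, contradicting maximality of $P$. (One small imprecision: it is not that \emph{every} $u\notin V(C)$ has a neighbour on $V(C)$, only that \emph{some} such $u$ does, which is all you need.)

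The second assertion is not actually proved. You correctly diagnose that the Menger-pairing idea fails (your $K_{2,3}$ example is apt), and you correctly identify the right setup for the fallback --- longest cycle $C$, a vertex $u\notin V(C)$, and a fan of two internally disjoint $u$--$V(C)$ paths landing at distinct $x_1,x_2\in V(C)$. But at that point you write that ``one either reroutes $C$ into a strictly longer cycle\ldots or concludes $c\geq 2\delta$'' and call this ``the technical heart'' without supplying it. That is precisely the step that needs work: with only two attachment points $x_1,x_2$, comparing the detour through $u$ against the two arcs of $C$ only yields $c\geq$ (roughly) $c/2+2$, not $c\geq 2\delta$. The actual argument must bring $\deg(u)\geq\delta$ into play more globally --- for instance by considering the component $H$ of $G-V(C)$ containing $u$, a longest path inside $H$, all attachment vertices of $H$ on $C$ in cyclic order, and the observation that consecutive attachment vertices cannot be adjacent on $C$ (and more strongly, that each arc between consecutive attachments must be at least as long as the corresponding detour through $H$), which is what forces $c\geq 2\delta$. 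As written, the circumference bound is a plan, not a proof.
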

Similar bounds on minimum degrees that force some internal structure on a graph are often referred to as \textit{Dirac-type} conditions. Ore later proved a strenghthening of Dirac's Theorem.

\begin{thm}[Ore~\cite{Ore}]\label{Ore}
Let $G$ be an $n$-vertex graph. If $d(u) + d(v) \geq n$ for every pair of nonadjacent vertices $u,v\in V(G)$, then $G$ is hamiltonian.
\end{thm}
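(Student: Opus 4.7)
The plan is to prove Ore's Theorem by the classical edge-maximal exchange argument. First, I would assume toward a contradiction that some $n$-vertex non-hamiltonian graph satisfies the Ore condition. Since adding an edge can only increase degrees and shrink the set of nonadjacent pairs, the condition is preserved under edge addition, so we may take $G$ to be edge-maximal among non-hamiltonian $n$-vertex graphs satisfying the hypothesis. Then $G$ is not complete (since $K_n$ is hamiltonian for $n \geq 3$), and by maximality adding any nonedge $uv$ to $G$ creates a hamiltonian cycle, which necessarily uses the new edge $uv$.

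Second, I would extract a hamiltonian path and perform a rotation-style counting step. Fix nonadjacent vertices $u, v$; removing $uv$ from the resulting hamiltonian cycle of $G + uv$ yields a hamiltonian path $v_1 v_2 \cdots v_n$ in $G$ with $v_1 = u$ and $v_n = v$. Define the index sets
\[
S = \{ i \in \{1,\ldots,n-1\} : v_1 v_{i+1} \in E(G)\}, \qquad T = \{ i \in \{1,\ldots,n-1\} : v_i v_n \in E(G)\}.
\]
Since $v_1 v_n \notin E(G)$, the shift $v_{i+1}$ enumerates each neighbor of $v_1$ exactly once in $\{v_2, \ldots, v_{n-1}\}$, giving $|S| = d(u)$; similarly $|T| = d(v)$. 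Both $S$ and $T$ lie in $\{1, \ldots, n-1\}$, so $|S \cup T| \leq n - 1$, while the Ore condition gives $|S| + |T| = d(u) + d(v) \geq n$. Therefore $S \cap T \neq \emptyset$.

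Finally, for any $i \in S \cap T$ we have edges $v_1 v_{i+1}$ and $v_i v_n$ in $G$, so the closed walk
\[
v_1, v_{i+1}, v_{i+2}, \ldots, v_n, v_i, v_{i-1}, \ldots, v_2, v_1
\]
is a hamiltonian cycle in $G$, contradicting the non-hamiltonicity of $G$. The whole argument is elementary and contains no real obstacle; the only subtlety worth flagging is the careful choice of index shifts so that $S$ and $T$ both sit inside $\{1, \ldots, n-1\}$ with $|S| = d(u)$ and $|T| = d(v)$, which is precisely what lets the pigeonhole bound $|S| + |T| \geq n > |S \cup T|$ force the existence of the crossing index $i$ and close the proof.
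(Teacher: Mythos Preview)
Your proof is correct and is precisely the classical argument for Ore's Theorem. The paper itself does not supply a proof of this result---it is stated as a cited theorem (Theorem~\ref{Ore}) and used only as background, so there is no in-paper proof to compare against. One very minor point: the argument tacitly assumes $n \geq 3$ (so that $K_n$ is hamiltonian and the notion of a hamiltonian cycle is meaningful), which is the standard convention for Ore's Theorem even though the paper's statement omits it.
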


This line of inquiry can be generalized to ask what properties guarantee that a graph $G$ contains a cycle of length $\ell$ for some $3 \leq \ell \leq n$ or, more interestingly, when does $G$ contain every cycle of length $\ell$? An $n$-vertex graph $G$ is \textbf{pancyclic} if $G$ contains cycles of all lengths $\ell$, $3 \leq \ell \leq n$. One particular result for pancyclic graphs is due to Bondy.

\begin{thm}[Bondy~\cite{Bondy}]\label{Bondy}
Let $G$ be an $n$-vertex hamiltonian graph with $|E(G)| \geq \frac {n^2}4$. Then either $G$ is pancyclic or $K_{\frac n2, \frac n2}$.
\end{thm}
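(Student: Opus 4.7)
The plan is to prove the contrapositive: assume $G$ is Hamiltonian with $e(G) \ge n^2/4$ and fails to contain a cycle of some length $\ell \in \{3, \ldots, n-1\}$, and derive that $G \cong K_{n/2,n/2}$. Fix a Hamilton cycle $C = v_1 v_2 \ldots v_n v_1$. The starting observation is that any chord $v_i v_j$ whose endpoints lie at cyclic distance $d$ along $C$ immediately yields cycles of length $d+1$ (closing through the short arc) and $n - d + 1$ (closing through the long arc). In particular, a missing cycle length $\ell$ forbids every chord at the distance $\min(\ell - 1, n - \ell + 1)$, giving a first concrete structural restriction on the chord set.

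Next I would upgrade to multi-chord constructions. Two chords arranged in a crossing or nested pattern on $C$ combine with arcs of $C$ to produce cycles whose lengths depend on the positions of their endpoints. By systematically classifying these configurations and demanding that none of them create an $\ell$-cycle, I would argue that the chords of $G$ relative to $C$ must obey a strict parity rule: every chord $v_i v_j$ satisfies $i \not\equiv j \pmod 2$. This would force the partition $\{v_i : i \text{ even}\} \sqcup \{v_i : i \text{ odd}\}$ to be a proper $2$-coloring of $G$, so $n$ is even and $G$ is bipartite.

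Once $G$ is known to be balanced and bipartite, the density hypothesis $e(G) \geq n^2/4$ is already tight for $K_{n/2, n/2}$, so every cross-partition edge must be present, yielding $G \cong K_{n/2, n/2}$. This gives the dichotomy claimed by the theorem.

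The main obstacle is the multi-chord step: a single missing cycle length only directly forbids one chord distance, so the parity structure cannot be read off from the single-chord observation alone. Ruling out all two- and three-chord configurations that would otherwise create a cycle of length $\ell$ requires a careful case analysis that depends on the position of $\ell$ within $[3, n-1]$. The range $\ell \approx n/2$ is the most delicate, because the single-chord constraint is weakest there and the bulk of the restriction on chord placements must come from composite constructions; this is where I expect the book-keeping to be hardest.
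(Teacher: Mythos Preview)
The paper does not prove this theorem at all: Theorem~\ref{Bondy} is quoted as a classical result of Bondy with a citation, and is used as a black box later (equation~\eqref{n4}). So there is no ``paper's own proof'' to compare against.

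Evaluating your proposal on its own merits, there is a genuine gap in the key step. You claim that the absence of a single cycle length $\ell$, together with a two-- or three--chord case analysis, forces every chord $v_iv_j$ of the Hamilton cycle to satisfy $i\not\equiv j\pmod 2$, and only afterwards invoke the density bound $e(G)\ge n^2/4$ to pin down $K_{n/2,n/2}$. But the implication ``no $C_\ell$ $\Rightarrow$ all chords have odd cyclic distance'' is simply false without the density hypothesis. For a small example, take $n=6$, the Hamilton cycle $v_1v_2\cdots v_6v_1$, and the single chord $v_1v_3$: this graph has no $C_4$, yet it contains a triangle and a chord of even cyclic distance. More generally, for any $\ell$ one can add a handful of chords of both parities to a Hamilton cycle while still avoiding $C_\ell$; what fails is doing so \emph{densely}. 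So the edge count cannot be deferred to the last line---it has to drive the structural argument, not merely certify completeness of the bipartite graph at the end.

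Bondy's original argument is organized quite differently. He proves, by a direct counting of forbidden chord pairs along the Hamilton cycle, that a Hamiltonian graph with no $C_{n-1}$ has at most $n^2/4$ edges, with equality only for $K_{n/2,n/2}$. The full theorem then follows by induction on $n$: once a $C_{n-1}$ is found, delete the missing vertex and either apply the induction hypothesis to the remaining Hamiltonian graph on $n-1$ vertices, or use the high degree of the deleted vertex to build the short cycles directly. If you want to repair your outline, the place to start is to bring the inequality $e(G)\ge n^2/4$ into the chord-counting step itself rather than treating bipartiteness as a consequence of forbidden configurations alone.
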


This theorem asserts that the same conditions in Theorem~\ref{D} not only guarantees a cycle of length $n$, but also cycles of {\em every} length unless the graph is one specific exceptional graph. In~\cite{Bondy}, Bondy proposed a so-called meta-conjecture: {\em ``almost any
nontrivial condition on a graph which implies that the graph is hamiltonian also implies that the
graph is pancyclic.” } He later added {\em ``...there may be a simple family of exceptional graphs."}

Since then, many pancyclic results in support of Bondy's meta-conjecture have appeared, see for example~\cite{Cream, Draganic, Letzter, Schmeichel}. 

A weaker version of pancyclicity  involves the \textbf{girth} of a graph $G$, which is the length of the shortest cycle contained in $G$, denoted $g(G)$. We say a graph $G$ is \textbf{weakly pancyclic} if $G$ contains all cycles of lengths $\ell$ where $g(G) \leq \ell \leq c(G)$. Note that if $g(G) = 3$ and $c(G) = n$, then being weakly pancyclic is equivalent to being pancyclic. For a more extensive list of results about weakly pancyclic graphs see~\cite{BFG}. We highlight the following which we will use to prove our main result.

\begin{thm}[Brandt~\cite{Brandt}]\label{Brandt}
Every nonbipartite graph $G$ of order $n$ with minimum degree $\frac{n+2}{3}$ is weakly pancylic, with girth either 3 or 4.
\end{thm}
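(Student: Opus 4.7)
The proof splits into two stages: bounding the girth, and then filling in all intermediate cycle lengths up to $c(G)$.

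For the girth, I would argue by contradiction. If $g(G)\ge 5$, then for any vertex $v$ the neighborhood $N(v)$ is independent (else a $C_3$) and no two vertices of $N(v)$ share a common neighbor outside $\{v\}$ (else a $C_4$). A Moore-type count from $v$ then yields at least $1+\delta(G)+\delta(G)(\delta(G)-1)=\delta(G)^2+1$ distinct vertices, so $n\ge \delta(G)^2+1 \ge ((n+2)/3)^2+1$. This rearranges to $n^2-5n+13\le 0$, which has no real solution. Hence $g(G)\le 4$, and we obtain $g(G)\in\{3,4\}$.

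For the cycle spectrum, fix a longest cycle $C$ of length $c=c(G)$ and a target length $\ell$ with $g(G)\le \ell<c$. The degree hypothesis $\delta(G)\ge (n+2)/3$ imposes two structural constraints I would exploit: every vertex $x\notin V(C)$ must send most of its edges into $V(C)$ (else a rerouting through the exterior of $C$ would yield a cycle longer than $c$), and the high degrees of vertices on $C$ force $G[V(C)]$ to contain many chords realizing many different short-arc lengths. The iterative step I would attempt is that whenever $G$ contains a cycle $C'$ of length $\ell'>g(G)$, it also contains a cycle of length $\ell'-1$ or $\ell'-2$; iterating downward from $c$ then sweeps out the interval $[g(G),c(G)]$ within one parity class.

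The main obstacle is parity. A single chord splits a cycle into arcs of fixed total length, so the chord-shortening step may only advance in jumps of two and thus may cover only one residue modulo $2$. This is precisely where non-bipartiteness enters: it supplies an odd cycle somewhere in $G$, which combined with the even cycles produced by chord arguments via an ear-exchange on $C$ allows us to cross between the two parity classes and reach every length in $[g(G),c(G)]$. The base cases $\ell\in\{g(G),g(G)+1\}$ would be handled directly using the triangle or $C_4$ already guaranteed at the girth step together with the minimum degree bound. I expect the most delicate technical point to be controlling the chord arguments finely enough to produce cycles at \emph{every} remaining length, since the minimum degree hypothesis $\delta(G)\ge (n+2)/3$ is essentially tight and leaves little slack in the counting.
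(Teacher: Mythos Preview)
The paper does not prove this theorem at all: Theorem~\ref{Brandt} is quoted from Brandt's thesis and used as a black box (in Claim~3.2, in the proof that $F'$ is pancyclic, and in Section~3.2), with no argument given. So there is no ``paper's own proof'' to compare your attempt against.

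As for your sketch itself: the girth bound is correct and clean. If $g(G)\ge 5$ the Moore-type count gives $n\ge \delta^2+1\ge ((n+2)/3)^2+1$, i.e.\ $n^2-5n+13\le 0$, which is impossible; so $g(G)\in\{3,4\}$.

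The second stage, however, is not a proof but a plan, and you openly flag the soft spots. The chord-shortening idea (``from a cycle of length $\ell'$ get one of length $\ell'-1$ or $\ell'-2$'') is not something the hypothesis $\delta\ge (n+2)/3$ hands you directly: a single vertex on a longest cycle need not have enough chords of the right arc-lengths to step down by one or two at every stage, and the parity-crossing via ``an odd cycle somewhere in $G$ combined with an ear-exchange'' is asserted rather than carried out. Brandt's actual argument (and the related Brandt--Faudree--Goddard treatment) is substantially more delicate than this outline suggests, and the threshold $(n+2)/3$ is genuinely tight, so the hand-waved steps cannot simply be filled in by routine counting. What you have is a reasonable heuristic for why the theorem should hold, not a proof.
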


Our main result is a Dirac-type result for pancyclicity of uniform hypergraphs. This can be viewed as extending Bondy's meta-conjecture to hypergraphs. Before doing so, we present several definitions and results for hypergraphs.
A \textbf{hypergraph} $\cH = (V,E)$ is a set of vertices $V$ and hyperedges $E$, where $e \subseteq V(\cH)$ for each $e \in E(\cH)$. 
If $|e| = r$ for every $e\in E(\cH)$, then we say that $\cH$ is an \textbf{ $r$-uniform hypergraph}. Note that in the case when $\cH$ is $2$-uniform, we recover a traditional graph.
The {\bf degree} of a vertex $v \in V(\cH)$, denoted $d_\cH(v)$, is the number of hyperedges in $\cH$ containing $v$. We define the {\bf minimum degree} of $\cH$ as $\delta(\cH) = \min_{v \in V(\cH)} d_\cH(v)$.

There are several different ways in which we can define a cycle in a hypergraph $\cH$ due to the nature of hyperedges. The most general definition is that of Berge cycles. 

A \textbf{Berge cycle} is a set of $\ell$ distinct vertices and $\ell$ distinct hyperedges, say $\{v_1, ..., v_\ell\}$ and $\{e_1, ..., e_\ell\}$, such that $\{v_i,v_{i+1}\} \subset e_i$, where the indices are taken modulo $\ell$. Hence we may write a Berge cycle $C$ as $v_1,e_1,v_2,e_2,\ldots, v_\ell,e_{\ell},v_l$.
We say a {\bf Berge hamiltonian cycle} in a hypergraph $\cH$ is a Berge cycle of length $n = |V(\cH)|$, and $\cH$ is {\bf pancyclic} if it contains Berge cycles of every length between $3$ and $n$.

There are several results centered around the Berge hamiltonicity of hypergraphs. We highlight below two Dirac-type results for uniform and non-uniform hypergraphs. See~\cite{Gyori, Gyori_Lemons, Lu, Coulson, KLM, KLM2, MHG, KL, FKL} for more results related to long Berge cycles in hypergraphs.

\begin{thm}[Kostochka, Luo, McCourt~\cite{KLM}]\label{KLMthm}
Let $3 \leq  r <n$, and let $H$ be an $n$-vertex $r$-uniform hypergraph. If
(a) $r\leq \left\lfloor (n-1)/2 \right\rfloor$ and $\delta(H) \geq  {\left\lfloor (n-1)/2 \right\rfloor \choose r-1} +1$ or 
(b) $r\geq n/2$ and $\delta(H) \geq r$, 
  then $H$ contains a Berge hamiltonian cycle. 
\end{thm}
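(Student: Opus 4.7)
The theorem divides into two regimes that I would treat separately.

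For \textbf{part (b)} ($r \geq n/2$), every hyperedge spans more than half the vertex set, so any two hyperedges meet in at least $2r - n \geq 0$ vertices. My plan is to fix an arbitrary cyclic ordering $v_1, \dots, v_n, v_1$ of $V(H)$ and, for each $i$, consider $\cE_i := \{e \in E(H) : \{v_i, v_{i+1}\} \subseteq e\}$. A Berge hamiltonian cycle realized on this ordering corresponds exactly to a system of distinct representatives for $\{\cE_i\}_{i \in [n]}$, which by Hall's theorem reduces to $|\bigcup_{i \in S} \cE_i| \geq |S|$ for every $S \subseteq [n]$. Since each hyperedge contains at most $r-1$ consecutive pairs of the ordering (maximized when its $r$ vertices form an arc) and $\delta(H) \geq r$ gives an abundant supply of hyperedges through each vertex, Hall's condition should hold --- and local obstructions can be removed by swapping two vertices in the ordering.

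For \textbf{part (a)} ($r \leq \lfloor (n-1)/2 \rfloor$), the plan is contradiction. Suppose $H$ is not Berge hamiltonian, and let $C = v_1 e_1 v_2 \cdots v_\ell e_\ell v_1$ be a Berge cycle of maximum length $\ell < n$. Fix $x \in V(H) \setminus V(C)$. A shadow observation --- if the pair-neighborhood $N(x) := \{u \neq x : \{x, u\} \subseteq f \text{ for some } f \in E(H)\}$ had $|N(x)| \leq \lfloor (n-1)/2 \rfloor$, then every hyperedge through $x$ would be $\{x\}$ together with an $(r-1)$-subset of $N(x)$, so $d_H(x) \leq \binom{|N(x)|}{r-1} \leq \binom{\lfloor (n-1)/2 \rfloor}{r-1}$, contradicting the hypothesis --- forces $|N(x)| \geq \lfloor (n-1)/2 \rfloor + 1 \geq n/2$.

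The heart of the argument is an \emph{insertion step}: whenever two consecutive vertices $v_i, v_{i+1}$ of $C$ both lie in $N(x)$, I attempt to build the longer Berge cycle $v_1, \dots, v_i, x, v_{i+1}, \dots, v_\ell, v_1$ by choosing two distinct hyperedges through $x$ witnessing $\{v_i, x\}$ and $\{x, v_{i+1}\}$, both distinct from $\{e_j : j \neq i\}$. To guarantee such a consecutive pair always exists, I would combine the large pair-neighborhood of $x$ with P\'osa--style rotations on Berge paths attached to $C$, generating many ``terminal'' pair-neighborhoods whose union must contain two consecutive positions of $C$. The main obstacle, and what distinguishes the Berge setting from the graph setting, is the bookkeeping of hyperedge distinctness: a single hyperedge can witness multiple consecutive pairs, so after each rotation or insertion one must track exactly which edges have been consumed. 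A final double-counting argument should bound the hyperedges through $x$ that resist every such insertion by at most $\binom{\lfloor (n-1)/2 \rfloor}{r-1}$, contradicting the degree hypothesis and forcing $\ell = n$.
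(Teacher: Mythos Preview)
The paper does not prove Theorem~\ref{KLMthm} at all: it is quoted from Kostochka, Luo, and McCourt~\cite{KLM} and used as a black box (specifically, in Section~3.2 to guarantee a Berge hamiltonian cycle when $r\in\{3,4\}$). There is therefore no proof in the paper to compare your proposal against.

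On the proposal itself, both halves are at the level of a sketch rather than a proof, and each contains a real gap. For part~(b), fixing an \emph{arbitrary} cyclic ordering and hoping to verify Hall's condition on the sets $\cE_i$ does not work in general: a hyperedge can contain as many as $\binom{r}{2}$ of the consecutive pairs $\{v_i,v_{i+1}\}$ (its $r$ vertices need not form a single arc), and conversely a bad ordering can leave some $\cE_i$ empty. The throwaway line that ``local obstructions can be removed by swapping two vertices'' is exactly where the content of the argument lies, and you have not supplied it. For part~(a), the outline --- take a longest Berge cycle, find an outside vertex $x$ with large pair-neighborhood, and insert --- is standard, but the step you flag as ``the main obstacle'' (hyperedge distinctness under P\'osa-type rotations) is genuinely the hard part of~\cite{KLM}, and your proposal only names it without resolving it. The closing sentence (``a final double-counting argument should bound\ldots'') is a statement of what you would like to be true, not an argument that it is.
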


\begin{thm}[F\"uredi, Kostochka, Luo~\cite{FKL}]\label{FKLthm}
    Let $n\geq 15$ and let $\cH$ be an $n$-vertex (not necessarily uniform) hypergraph such that $\delta(\cH) \geq 2^{(n-1)/2}+1$ if $n$ is odd or $\delta(\cH) \geq 2^{n/2-1}+2$ if $n$ is even. Then $\cH$ contains a Berge hamiltonian cycle.
\end{thm}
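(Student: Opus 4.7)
The hypothesis $\delta(\cH)\ge 2^{(n-1)/2}+1$ is essentially $\sum_{r\ge 2}\binom{\lfloor(n-1)/2\rfloor}{r-1}+1$, matching the bound in Theorem~\ref{KLMthm}(a) summed across all edge sizes. Since different vertices may concentrate their degree across different edge sizes, a direct reduction to a single uniform layer seems out of reach, so my plan is a direct longest-path argument in the style of Dirac, combined with Pósa-type rotations adapted to the Berge setting.

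The starting point is the endpoint observation. Assume for contradiction that $\cH$ has no Berge Hamiltonian cycle, and take a longest Berge path $P=v_0\,e_0\,v_1\,\ldots\,e_{k-1}\,v_k$. Any hyperedge $e\ni v_0$ distinct from $e_0,\ldots,e_{k-1}$ must satisfy $e\subseteq V(P)$: otherwise, for $u\in e\setminus V(P)$, the sequence $u\,e\,v_0\,e_0\,v_1\,\ldots\,e_{k-1}\,v_k$ is a longer Berge path, contradicting maximality. Counting gives
\[
d_\cH(v_0)\ \le\ k + (2^{k}-1),
\]
and the same bound holds for $v_k$. Combined with the degree hypothesis this already forces $k$ to be at least roughly $(n-1)/2$.

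To push $k$ up to $n$, I would iterate Pósa's rotation in the Berge setting: for each non-path hyperedge $f\ni v_k$ with $f\subseteq V(P)$ and $v_i\in f$, the rotated sequence $v_0\,e_0\,\ldots\,v_i\,f\,v_k\,e_{k-1}\,v_{k-1}\,\ldots\,e_{i+1}\,v_{i+1}$ is a longest Berge path on the same vertex set ending at $v_{i+1}$. Let $S$ be the set of vertices reachable as endpoints via sequences of such rotations starting from $v_k$, and let $S_0$ be defined symmetrically from $v_0$. A Berge analogue of the standard Pósa lemma should show that $|S|+|S_0|$ grows linearly in $k$, with every $s\in S\cup S_0$ inheriting the endpoint containment property. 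For any $w\notin V(P)$, every hyperedge $e\ni w$ must then satisfy $e\cap(S\cup S_0)=\varnothing$ (else a rotation followed by extension through $e$ yields a longer path), giving $d_\cH(w)\le 2^{n-|S\cup S_0|-1}$; combined with the degree hypothesis this forces $|S\cup S_0|\le(n-1)/2$, and together with the Pósa lower bound on $|S\cup S_0|$ this forces $V(P)=V(\cH)$. A closing step then finds a Berge edge inside $V(P)$ joining two endpoints of a rotated Hamilton path, completing a Berge Hamiltonian cycle.

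The main obstacle I anticipate is carrying the rotation argument through while maintaining the distinct-edge requirement of the Berge path: each rotation consumes one edge and frees another, and after many rotations the assignment of consecutive pairs to hyperedges must remain injective. I would handle this by phrasing Berge-path existence as a perfect matching problem in an auxiliary bipartite graph (consecutive pairs versus hyperedges, with adjacency given by containment) and using Hall's theorem, with the exponential degree bound providing the slack needed for the Hall condition to persist throughout the process. The two parity cases ($2^{(n-1)/2}+1$ for odd $n$ and $2^{n/2-1}+2$ for even $n$) correspond to the two regimes of the floor in the endpoint inequality $d_\cH(v_0)\le k + 2^k - 1$; the extra $+1$ in the even case absorbs the rounding loss when $k=n/2-1$, and the threshold $n\ge 15$ ensures the small-$k$ contradiction above has positive slack.
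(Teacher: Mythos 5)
The theorem is stated in the paper by citation only (to \cite{FKL}), so there is no in-paper proof to compare against; the published proof in~\cite{FKL} proceeds, much like Section~3 of this paper, by matching hyperedges to pairs in the $2$-shadow and analysing the resulting dense graph $F$ via the stability Lemma~\ref{5G}, not by Pósa rotations. Your proposal takes a genuinely different route, but it has concrete gaps that go beyond ``details to be filled in.''

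The central gap is the step ``for any $w\notin V(P)$, every hyperedge $e\ni w$ must satisfy $e\cap(S\cup S_0)=\varnothing$.'' This is false as stated: a hyperedge $e\ni w$ can be one of the path edges $e_0,\ldots,e_{k-1}$, since Berge-path edges are only required to contain the relevant consecutive pair and are free to contain vertices off the path, including $w$ and vertices of $S\cup S_0$. So the bound $d_\cH(w)\le 2^{\,n-|S\cup S_0|-1}$ does not follow, and the linkage between the endpoint set and the degree of outside vertices breaks. The second gap is the invocation of ``a Berge analogue of the standard Pósa lemma'' for the linear growth of $|S\cup S_0|$; the graph proof of Pósa's lemma uses that adjacency is a fixed binary relation on $V$, whereas here ``$v_i$ and $v_k$ can be joined'' depends on the \emph{current} injective assignment of pairs to hyperedges, which changes after every rotation. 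You flag the distinct-edge bookkeeping yourself, but the proposed fix (Hall's theorem on a pairs-vs-hyperedges bipartite graph) is only gestured at — in particular you never verify a Hall condition that survives the sequence of rotations, and without that the claim that each rotated sequence is still a valid Berge path is unsupported. Finally, the closing step (finding an unused hyperedge joining the two endpoints of a rotated Hamilton path) again hits exactly the distinct-edge issue, since the natural candidate hyperedge may already be consumed. As written, this is a plausible program, not a proof; the missing lemmas are precisely the ones where the Berge setting departs from the graph setting, which is why~\cite{FKL} instead reduces to a graph $F$ up front and imports Voss's classification of dense nonhamiltonian graphs.
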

Moreover, bounds in both Theorems~\ref{KLMthm} and~\ref{FKLthm} are best possible. Observe that in Theorem~\ref{KLMthm} the minimum degree condition changes dramatically depending on the uniformity of the hypergraph.

\section{Main result}
Our main result is a strenghtening of Theorem~\ref{KLMthm} for small uniformity---we show that the same minimum degree condition that guarantees the existence of a Berge hamiltonian cycle in fact is sufficient for the much stronger condition of pancyclicity whenever the hypergraph is sufficiently large.

\begin{thm}\label{main}Let $n$ and $r$ be positive integers such that $n \geq 70$ and $3\leq r \leq \lfloor \frac{n-1}2 \rfloor -2$. If $\cH$ is an $n$-vertex, $r$-uniform hypergraph with $\delta(\cH) \geq {\lfloor (n-1)/2 \rfloor \choose r-1}+1$, then $\cH$ is pancyclic.  
\end{thm}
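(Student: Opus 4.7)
The plan is to reduce Berge pancyclicity of $\cH$ to ordinary pancyclicity of the \emph{shadow graph} $\partial\cH$ on $V(\cH)$, where $\{u,v\}\in E(\partial\cH)$ iff some hyperedge of $\cH$ contains $\{u,v\}$, and then lift each graph cycle to a Berge cycle. The length-$n$ case is handled directly by Theorem~\ref{KLMthm}, so the real work is producing Berge cycles of lengths $3\le \ell\le n-1$.

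First I would verify $\partial\cH$ is pancyclic. If $\deg_{\partial\cH}(v)=k$, then every hyperedge through $v$ lives inside a set of $k+1$ vertices, so $d_\cH(v)\le \binom{k}{r-1}$; the hypothesis $d_\cH(v)>\binom{\lfloor(n-1)/2\rfloor}{r-1}$ then forces $k\ge \lfloor(n-1)/2\rfloor+1\ge \lceil n/2\rceil$. Hence $\delta(\partial\cH)\ge \lceil n/2\rceil$, and $\partial\cH$ is nonbipartite because any hyperedge of size $r\ge 3$ contributes a triangle. Theorem~\ref{D} gives a Hamilton cycle and the edge count satisfies $|E(\partial\cH)|\ge n^2/4$; Theorem~\ref{Bondy} then implies $\partial\cH$ is pancyclic since the exceptional graph $K_{n/2,n/2}$ is bipartite and so excluded.

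The main obstacle is the \emph{lifting} step: given a cycle $v_1v_2\cdots v_\ell$ in $\partial\cH$, I need distinct hyperedges $e_1,\dots,e_\ell\in E(\cH)$ with $\{v_i,v_{i+1}\}\subseteq e_i$. By Hall's theorem this is equivalent to $|\bigcup_{i\in S}\cE_i|\ge |S|$ for every $S\subseteq\{1,\dots,\ell\}$, where $\cE_i$ is the set of hyperedges of $\cH$ containing $\{v_i,v_{i+1}\}$. A Hall failure would force several consecutive cycle-pairs to be covered jointly by very few hyperedges, which is a highly restrictive local configuration.

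I would handle Hall failures in two regimes. When $\ell$ is large compared to $r$, any offending block of pairs must live inside a small union of hyperedges, hence in a small vertex set, and I would reroute the cycle locally inside $\partial\cH$ --- using its min-degree $\lceil n/2\rceil$ to find an alternative subpath of the correct length through fresh vertices --- so as to avoid the congested block. When $\ell$ is close to $r$, I would abandon $\partial\cH$ and construct a length-$\ell$ Berge cycle directly inside a single hyperedge together with a few additional hyperedges through one of its vertices, whose existence is guaranteed by the min-degree hypothesis. The slack provided by $n\ge 70$ and $r\le \lfloor(n-1)/2\rfloor-2$ should make both the rerouting counts and the direct constructions go through.
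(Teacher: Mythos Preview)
Your reduction showing $\partial\cH$ is pancyclic is fine, but the entire content of the theorem lives in the lifting step, and your plan there has a genuine gap. A Hall failure for a cycle $v_1\cdots v_\ell$ means some set $S$ of indices has $\bigl|\bigcup_{i\in S}\cE_i\bigr|<|S|$; nothing forces $S$ to consist of consecutive indices, nor to be small, nor to be contained in few hyperedges in any useful sense. Your ``reroute locally through fresh vertices'' move is therefore not well-defined: you would need, for each offending length $\ell$, to produce \emph{some other} cycle of the \emph{same} length $\ell$ in $\partial\cH$ with no Hall failure, and pancyclicity of $\partial\cH$ gives you existence of one $\ell$-cycle, not any control over which one. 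The short-$\ell$ regime is likewise unsubstantiated: codegrees of pairs can be as small as $1$ under the hypotheses, so ``build a Berge $\ell$-cycle inside one hyperedge plus a few more'' needs an actual argument.

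The paper sidesteps the lifting problem entirely with one structural idea you are missing: instead of lifting cycles one length at a time, it fixes a \emph{maximal matching} $\phi$ from $E(\cH)$ into $E(\partial\cH)$ and works in the subgraph $F\subseteq\partial\cH$ of matched pairs. Every cycle in $F$ lifts automatically to a Berge cycle (distinctness of hyperedges is built into the matching), so the goal becomes proving that $F$ itself is pancyclic. This is nontrivial because $F$ can have much smaller minimum degree than $\partial\cH$; the paper shows $\delta(F)\ge\lfloor(n-1)/2\rfloor$ for $r\ge 5$ via a counting argument using maximality of $\phi$, then applies Brandt's weak-pancyclicity theorem (Theorem~\ref{Brandt}) and a stability classification of nonhamiltonian graphs with $\delta\ge\lfloor(n-1)/2\rfloor$ (Lemma~\ref{5G}) to handle the hamiltonian-cycle case by modifying $\phi$ slightly. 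For $r\in\{3,4\}$ the paper does use Theorem~\ref{KLMthm} to force $F$ to be hamiltonian from the start, then argues $\delta(F)\ge(n+2)/3$ after further adjustments to $\phi$. In short, the paper's matching device converts the delicate per-cycle Hall problem into a single global min-degree statement about $F$, which is what makes the proof go through.
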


The bound in Theorem~\ref{main} is best possible due to the following constructions.

{\bf Construction 1.} Let $3 \leq r \leq \lfloor (n-1)/2 \rfloor -1$. If $n$ is odd, then $\cH_1$ is the $n$-vertex $r$-uniform hypergraph consisting of two $(n+1)/2$-cliques sharing exactly one vertex. If $n$ is even, then $\cH_1$ is the $n$-vertex $r$-uniform hypergraph consisting of two disjoint $n/2$-cliques and one additional edge. 

{\bf Construction 2.}  Let $3 \leq r \leq \lfloor (n-1)/2 \rfloor -1$. The vertex set of the $n$-vertex $r$-uniform hypergraph $\cH_2$ consists of two disjoint sets of size $\lfloor (n-1)/2 \rfloor$ and $\lceil (n+1)/2 \rceil$, respectively. The edge set of $\cH_2$ consists of all $r$-edges which contain at most one vertex from the set of size $\lceil (n+1)/2 \rceil$.

Both constructions have minimum degree ${\lfloor (n-1)/2 \rfloor \choose r-1}$ and contain no hamiltonian Berge cycle. Moreover, $\cH_1$ contains no Berge cycles with length longer than $\lceil n/2 \rceil$.

{\bf Remark.} Notably, in the cases $r \geq 5$, we present a proof that is independent of Theorem~\ref{KLMthm}. Our proof is shorter than that of Theorem~\ref{KLMthm} and reduces to working in a dense subgraph of the so-called $2$-shadow of $\cH$ in which we apply well known results for non-hamiltonian and non-pancyclic graphs. We also believe that the same bound on $\delta(\cH)$ should hold for $r = \lfloor (n-1)/2 \rfloor -1$, but would require different methods to prove.

\subsection{Proof outline}

Let $\cH = (V,E)$ be a hypergraph. The \textbf{$2$-shadow} of $\cH$, denoted $\partial_2\cH$, is the graph with $V(\partial_2\cH) = V(\cH)$ and $uv \in E(\partial_2\cH)$ if and only if $\{u,v\} \subseteq e$ for some $e\in E(\cH)$. Equivalently one can construct $\partial_2\cH$ by replacing $e\in E(\cH)$ with a clique over the vertices of $e$. 

We consider the $2$-shadow of hypergraph $\cH$ and construct a maximal matching between $E(\cH)$ and $E(\partial_2\cH)$ in which a hyperedge get matched to a distinct pair of vertices contained in the hyperedge. The pairs that get matched form a subgraph $F$ of $\partial_2 \cH$. 

In $F$, a cycle of length $\ell$ corresponds to a Berge cycle of length $\ell$ in $\cH$ which is obtained by replacing the cycle edges with their matched hyperedges. Thus if $F$ is pancyclic then so is $\cH$.

%Using the large minimum degree of $\cH$, we show that the graph $F$ also must have large minimum degree.

In Section 3.1, we handle the case $5 \leq r \leq \lfloor (n-1)/2 \rfloor - 2$ by showing that $F$ is nonhamiltonian with minimum degree almost $n/2$. We then apply existing stability theorems for such graphs to recover a dense spanning subgraph of $F$ that is {\em almost} hamiltonian. After appropriate small modifications, we can construct a new matching which yields a hamiltonian cycle. We then use Theorem~\ref{Brandt} to show that the resulting graph is weakly pancyclic with girth $3$ thereby  proving that $\cH$ is pancyclic.

In Sections 3.2, we prove Theorem~\ref{main} for the remaining cases of $r$. In particular, we apply Theorem~\ref{KLMthm} and choose a matching that preserves hamiltonicity in $F$. We then show $F$ has sufficiently large minimum degree to apply  Theorem~\ref{Brandt} and conclude $\cH$ is pancyclic. 

\section{Proof of Theorem~\ref{main}.}
Let $3 \leq r \leq \lfloor (n-1)/2 \rfloor - 2$ and $n \geq 70$. Suppose $\cH$ is an  $n$-vertex, $r$-uniform hypergraph with minimum degree $\delta(\cH) \geq {\lfloor (n-1)/2 \rfloor \choose r-1} + 1$. Denote by $H=\partial_2(\cH)$ the $2$-shadow of $\cH$. Consider the bipartite graph $B$ with parts $(E(\cH), E(H))$ where $f \in E(\cH)$ is adjacent to $e \in E(H)$ if $e \subset f$.  Suppose $\phi$ is a maximal matching of $B$, and define $\cF = \{f \in E(\cH): f$ is matched in $\phi\}$ and $F = \{\phi(f): f \in \cF\}$ (we view $\cF$ and $F$ as an $r$-graph and a $2$-graph, respectively, both on vertex set $V(\cH)$). Then we can view our matching $\phi$ as a bijective function from the edges of $\cF$ to the edges of $F$. 

Observe that if $C =v_1,v_2,\ldots, v_\ell, v_1$ is a cycle of length $\ell$ in $F$, then \[\cC=v_1,\phi^{-1}(v_1v_2),v_2, \ldots, v_{\ell-1},\phi^{-1}(v_{\ell-1}v_\ell),v_\ell, \phi^{-1}(v_\ell v_1),v_1\] is a Berge cycle of length $l$ in $\cF$. Thus if $F$ is pancyclic then $\cF$ (and therefore $\cH$) is also pancyclic. So let us assume this is not the case. 

\begin{claim}\label{clique}If $f \in E(\cH) - E(\cF)$, then the vertices in $f$ induce a clique in $F$.\end{claim}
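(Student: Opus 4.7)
The plan is a one-line argument by contradiction using maximality of the matching $\phi$.

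Suppose for contradiction that there exist distinct vertices $u,v \in f$ with $uv \notin E(F)$. Since $\{u,v\} \subseteq f$ and $f \in E(\cH)$, by definition of the 2-shadow we have $uv \in E(H)$, so $\{u,v\}$ is a vertex in the bipartition of $B$. Moreover, $uv \notin E(F)$ means $\{u,v\}$ is not in the image of $\phi$, i.e., $\{u,v\}$ is unmatched in $\phi$. By hypothesis $f \in E(\cH)-E(\cF)$, so $f$ is also unmatched. Finally, $\{u,v\} \subseteq f$ means that $f$ and $\{u,v\}$ are adjacent in $B$.

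Therefore we can augment $\phi$ by adding the $B$-edge between $f$ and $\{u,v\}$, producing a strictly larger matching of $B$. This contradicts the maximality of $\phi$, completing the proof. The only real ingredient is unwinding the definitions of $\cF$, $F$, $\partial_2\cH$, and maximality, so no meaningful obstacle is expected.
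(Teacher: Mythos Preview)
Your proof is correct and is exactly the paper's argument, just with the definitions of $B$, $\cF$, $F$, and $H$ unwound more explicitly; the paper compresses it to one sentence but the content is identical.
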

\begin{proof}If some pair of vertices $\{x,y\} \subseteq f$ are nonadajcent in $F$ then we may enlarge $\phi$ to a bigger matching by mapping $f$ to $xy$.
\end{proof}

\begin{claim}\label{triangle}$F$ contains a triangle.
\end{claim}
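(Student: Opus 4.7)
My plan is to case-split on whether the maximal matching $\phi$ leaves any hyperedge unsaturated. If some $f \in E(\cH) \setminus E(\cF)$ is unmatched, then Claim~\ref{clique} tells us the $r$ vertices of $f$ induce a clique in $F$, and since $r \geq 3$ this already contains a triangle.

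Otherwise every hyperedge of $\cH$ is matched, so $E(\cF) = E(\cH)$ and, since $\phi$ is a bijection between $\cF$ and $F$,
\[
|E(F)| = |E(\cH)| = \frac{1}{r}\sum_{v \in V(\cH)} d_\cH(v) \geq \frac{n\delta(\cH)}{r} \geq \frac{n}{r}\binom{m}{r-1},
\]
where $m = \lfloor (n-1)/2 \rfloor$. I would then invoke Mantel's theorem (every $n$-vertex graph with more than $n^2/4$ edges contains a triangle): it suffices to verify $\binom{m}{r-1} > rn/4$, since then $|E(F)| > n^2/4$ forces $F$ to contain a triangle.

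The remaining step is a short arithmetic check of $\binom{m}{r-1} > rn/4$. Because $3 \leq r \leq m-2$, we have $r-1 \in [2, m-3]$, so the unimodality and symmetry of $\binom{m}{\cdot}$ (together with $\binom{m}{m-3} = \binom{m}{3} \geq \binom{m}{2}$, which holds since $n \geq 70$ gives $m \geq 34$) yield $\binom{m}{r-1} \geq \binom{m}{2} = m(m-1)/2$. On the other hand, $r \leq m-2$ and $n \leq 2m+2$ give $rn/4 \leq (m-2)(m+1)/2$, and expanding shows the two bounds differ by exactly $1$, so $\binom{m}{r-1} \geq rn/4 + 1$. The main conceptual point — and the reason this claim is not harder — is that an unmatched hyperedge is handled for free by Claim~\ref{clique}, so we only need the simple Mantel bound in the fully matched case.
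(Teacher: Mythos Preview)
Your proof is correct and follows essentially the same approach as the paper: split on whether some hyperedge is unmatched (handled by Claim~\ref{clique}), and otherwise count $|E(F)|=|E(\cH)|$ and compare against Mantel's bound $n^2/4$. The only difference is cosmetic: the paper splits the final inequality into the cases $r=3$ and $r\ge 4$, whereas you handle all $r$ at once via unimodality of $\binom{m}{\cdot}$ and the clean identity $m(m-1)/2-(m-2)(m+1)/2=1$.
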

\begin{proof}
If there exists a hyperedge $f \in E(\cH) - E(\cF)$, then by the previous claim, the $r$ vertices in $f$ induce a clique in $F$. Such a clique contains a triangle, so $f$ cannot exist. It follows that every hyperedge of $\cH$ gets matched in $\phi$ to an edge in $F$. Thus
\[\left({\lfloor (n-1)/2 \rfloor \choose r-1}+1\right)n/r \leq \frac{\sum_{v \in V(\cH)} d_\cH(v)}{r} = |E(\cH)| = |E(F)|.\]
On the other hand if $F$ contains no triangles, then by Mantel's Theorem~\cite{Mantel}, $|E(F)| \leq n^2/4$.

We analyze the function $\frac{n}{r}{\lfloor (n-1)/2 \rfloor \choose r-1}$ which is smaller than the left hand side.  For $4 \leq r \leq \lfloor (n-1)/2 \rfloor-2$ and $n \geq 70$,
\[\frac{n}{r}{\lfloor (n-1)/2 \rfloor \choose r-1} \geq \frac{n}{\lfloor (n-1)/2 \rfloor-2}{\lfloor (n-1)/2 \rfloor \choose 3}> \frac{n^2}{4}.\]
For $r=3$, we have that
\[\frac{n}{r}{\lfloor (n-1)/2 \rfloor \choose r-1} = \frac{n}{3}{\lfloor (n-1)/2 \rfloor \choose 2}> \frac{n^2}{4},\]
a contradiction.
\end{proof}

We now will use the large minimum degree of $\cH$ to show that $F$ also has large minimum degree, however to do so we consider cases based around the concavity of ${\lfloor  ({n-1})/ 2\rfloor -1\choose r-1}$. We first consider $r \geq 5$. Here we use the fact that ${n\choose 2} \leq {\lfloor (n-1)/2\rfloor-1 \choose r-2}$ for sufficiently large $n$.

For a vertex $v \in V(\cH)$, we partition the hyperedges in $\cH$ containing $v$ into two sets $\cN_v$ and $\cL_v$ where $\cN_v = \{f \in E(\cH): v \in f, f \subset \{v\} \cup N_F(v)\}$ is the set of hyperedges containing only $v$ and its $F$-neighbors, and $\cL_v = \{f \in E(\cH): v \in f, f \notin \cN_v\}$ is the set of leftover hyperedges.
    
\subsection{Proof of Theorem \ref{main} for $5 \leq r\leq \lfloor \frac {n-1}2\rfloor -2$.}
Suppose $5 \leq r \leq \lfloor \frac {n-1}2\rfloor -2$. 
\begin{claim}$\delta(F) = \lfloor (n-1)/2 \rfloor$ and $F$ is nonhamiltonian.\end{claim}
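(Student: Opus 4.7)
The plan is to establish both conclusions in three steps: first a lower bound $\delta(F) \geq \lfloor (n-1)/2 \rfloor$ exploiting the maximality of $\phi$, then nonhamiltonicity via Brandt's theorem, then the matching upper bound $\delta(F) \leq \lfloor (n-1)/2 \rfloor$ via Dirac's theorem.

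For the lower bound, fix $v \in V(\cH)$ and suppose for contradiction that $d_F(v) \leq \lfloor (n-1)/2 \rfloor - 1$. Every hyperedge in $\cN_v$ consists of $v$ together with $r-1$ of its $F$-neighbors, so $|\cN_v| \leq \binom{d_F(v)}{r-1} \leq \binom{\lfloor (n-1)/2 \rfloor - 1}{r-1}$. Combining with the degree hypothesis and Pascal's identity gives
\[|\cL_v| \geq \binom{\lfloor (n-1)/2 \rfloor}{r-1} + 1 - \binom{\lfloor (n-1)/2 \rfloor - 1}{r-1} = \binom{\lfloor (n-1)/2 \rfloor - 1}{r-2} + 1.\]
On the other hand, every $f \in \cL_v$ contains a non-$F$-neighbor of $v$ and hence, by Claim~\ref{clique}, must be matched by $\phi$; distinct matched hyperedges go to distinct edges of $F$, so $|\cL_v| \leq |E(F)| \leq \binom{n}{2}$. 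These two bounds on $|\cL_v|$ contradict the excerpt's inequality $\binom{n}{2} \leq \binom{\lfloor (n-1)/2 \rfloor - 1}{r-2}$, which holds for $r \geq 5$ and $n \geq 70$. Hence $d_F(v) \geq \lfloor (n-1)/2 \rfloor$ for every $v$.

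Next, $F$ contains a triangle by Claim~\ref{triangle}, so it is nonbipartite, and the lower bound above gives $\delta(F) \geq \lfloor (n-1)/2 \rfloor \geq (n+2)/3$ for $n \geq 7$. Thus Theorem~\ref{Brandt} applies: $F$ is weakly pancyclic with girth $3$ or $4$, and the triangle pins girth down to $3$. If $F$ were hamiltonian then $c(F) = n$, so weak pancyclicity would make $F$ (and hence $\cH$ via the matching correspondence described before Claim~\ref{clique}) pancyclic, contradicting our standing assumption that $\cH$ is not pancyclic. Finally, since $F$ is nonhamiltonian, Theorem~\ref{D} forces $\delta(F) < n/2$, equivalently $\delta(F) \leq \lfloor (n-1)/2 \rfloor$ regardless of the parity of $n$; combined with the lower bound, this yields equality.

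The main obstacle is the lower-bound step, and specifically the upper bound on $|\cL_v|$: a direct count of hyperedges containing $v$ and a non-neighbor offers nothing useful, and one has to invoke the clique property guaranteed by the maximality of $\phi$ (Claim~\ref{clique}) to conclude $\cL_v \subseteq \cF$. Once this is done, the matching itself supplies the clean bound $|\cL_v| \leq |E(F)|$ that clashes with the concavity-driven lower bound. After that, nonhamiltonicity and the matching upper bound on $\delta(F)$ are just one invocation each of Brandt and Dirac.
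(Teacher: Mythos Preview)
Your proof is correct and follows essentially the same approach as the paper: bound $|\cN_v|$ trivially, use Claim~\ref{clique} to get $\cL_v \subseteq \cF$ and hence $|\cL_v| \leq |E(F)| \leq \binom{n}{2}$, then invoke the inequality $\binom{n}{2} \leq \binom{\lfloor (n-1)/2\rfloor - 1}{r-2}$ and Pascal's identity for the contradiction; finish with Claim~\ref{triangle}, Theorem~\ref{Brandt}, and Theorem~\ref{D}. The only cosmetic difference is that you isolate a lower bound on $|\cL_v|$ while the paper writes the full chain for $d_\cH(v)$, but the arithmetic is identical.
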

\begin{proof}
First we show that $\delta(F) \geq \lfloor (n-1)/2 \rfloor$. Suppose not. Then there exists a vertex $v$ with $d_F(v)=:d \leq \lfloor (n-1)/2 \rfloor - 1$. 
Then $|\cN_v| \leq {d_F(v) \choose r-1}$. On the other hand, for each $f \in \cL_v$, there exists some vertex $u \notin N_F(v)$ such that $\{v,u\} \subseteq f$. If $f$ is unmatched in $\phi$ (i.e., $f \notin E(\cF)$), then by Claim~\ref{clique}, $uv \in E(F)$. Thus $\cL_v \subseteq E(\cF)$. It follows that $|\cL_v| \leq |E(F)| \leq {n \choose 2}$.
Then
\begin{equation}\label{deg0}{\lfloor (n-1)/2\rfloor \choose r-1} +1 \leq \delta(\cH) \leq d_{\cH}(v) \leq |\cN_v| + |\cL_v| \leq {\lfloor (n-1)/2 \rfloor -1 \choose r-1} + {n \choose 2}.
\end{equation} 
Since $n \geq 70$, ${n \choose 2} \leq {\lfloor (n-1)/2\rfloor - 1 \choose 3} \leq {\lfloor (n-1)/2\rfloor - 1 \choose r-2}$. Thus the right side of~\eqref{deg0} is at most ${\lfloor (n-1)/2\rfloor - 1 \choose r-1} + {\lfloor (n-1)/2\rfloor - 1 \choose r-2} = {\lfloor (n-1)/2 \rfloor \choose r-1} < \delta(\cH)$, a contradiction. This shows $\delta(F) \geq \lfloor (n-1)/2 \rfloor$. Then, by Theorem~\ref{Brandt}, $F$ is weakly pancyclic.

Since $F$ has girth $3$ by Claim~\ref{triangle}, if $F$ is hamiltonian, then $F$ is pancyclic. If $F$ is nonhamiltonian, then by Dirac's Theorem, $\delta(F) = \lfloor (n-1)/2 \rfloor$, as desired.

%By Mantel's Theorem, $|E(F)| \leq n^2/4$. Moreover, if there exists $f \in E(\cH) - E(\cF)$, then by Claim~\ref{clique}, the vertices $f$ induces an $r$-clique in $F$, so $F$ has a triangle. Thus $\cH = \cF$ and $|E(\cH)| = |E(F)| \leq n^2/4$.
%But $|E(\cH)| \geq n\delta(\cH)/r \geq n{\lfloor (n-1)/2 \rfloor \choose r-1}/r > n^2/4$ whenever $n \geq 15$. 
\end{proof}

We now consider when $F$ is nonhamiltonian. We use a stability result due to Voss~\cite{Voss} (more detailed descriptions by Jung~\cite{Jung} and Jung, Nara~\cite{JungNara} are also available). We only state and use a weaker version. 
Define five classes of nonhamiltonian graphs. For simplicity, we denote $k:= \lfloor (n-1)/2 \rfloor$.

% \newline
${}$\enskip --- \enskip Let $n=2k+2$, $V=V_1\cup V_2$, $|V_1|=|V_2|=k+1$, ($V_1\cap V_2=\emptyset$). We say that $G\in \cG_1$ 
 if its edge set is the union of two complete graphs with vertex sets $V_1$ and $V_2$ and it contains at most one further edge $e_0$ (joining $V_1$ and $V_2$);
  \newline
 ${}$\enskip --- \enskip Let $n=2k+1$, $V=V_1\cup V_2$, $|V_1|=|V_2|=k+1$, $V_1\cap V_2=\{ x_0\}$. We say that $G\in \cG_2$
 if its edge set is the union of two complete graphs with vertex sets $V_1$ and $V_2$; 
\newline
${}$\enskip --- \enskip Let $n=2k+2$, $V=V_1\cup V_2$, $|V_1|=k+1$, $|V_2|=k+2$, $V_1\cap V_2=\{ x_0\}$. We say that $G\in \cG_3$
 if its edge set is the union of a complete graph with vertex set $V_1$ 
  and a $2$-connected graph $G_2$ with vertex set $V_2$ such that $\deg_G(v)\geq k$ for every vertex $v\in V$;
  \newline
${}$\enskip --- \enskip Let $n=2k+1$, $V=V_1\cup V_2$, $|V_1|=k$, $|V_2|=k+1$, ($V_1\cap V_2=\emptyset$).  We say that $G\in \cG_4$ 
 if $V_2$ is an independent set, and its edge set contains all edges joining $V_1$ and $V_2$;
\newline
${}$\enskip --- \enskip Let $n=2k+2$, $V=V_1\cup V_2$, $|V_1|=k$, $|V_2|=k+2$, ($V_1\cap V_2=\emptyset$).  We say that $G\in \cG_5$ if 
 $V_2$ contains at most one edge $e_0$ and $\deg_G(v)\geq k$ for every vertex $v\in V$. In particular, if $e_0$ does not exist then $G$ contains $ K_{k, k+2}$, and if $e_0$ exists, then $G$ contains all but at most $2$ edges between $V_1$ and $V_2$.

\begin{lem}[~\cite{Voss}, see~\cite{FKL} for a short proof]\label{5G}Let $k\geq 3$ be an integer, $n \in \{ 2k+1, 2k+2\} $.
Suppose that $F$ is an $n$-vertex nonhamiltonian graph with $\delta(F) \geq k = \lfloor(n-1)/2\rfloor$, $V:= V(F)$.
Then $F\in \cG_1\cup \dots \cup \cG_5$.  
\end{lem}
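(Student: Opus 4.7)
The plan is to classify all such $F$ via a longest-cycle stability argument in the spirit of Voss. Throughout let $k = \lfloor (n-1)/2 \rfloor$, and note that since $F$ is nonhamiltonian, Dirac's theorem (Theorem~\ref{D}) forces $\delta(F) = k$ exactly. First I would handle low connectivity. Since $\delta(F) \geq k$, every component of $F$ has at least $k+1$ vertices, so a disconnected $F$ must consist of exactly two components of size $k+1$ each; this forces $n = 2k+2$ and places $F$ into $\cG_1$ with the optional edge $e_0$ absent. If instead $F$ is connected with a cut vertex $x$, then each component $K$ of $F - x$ satisfies $|V(K)| \geq k$ (every $v \in V(K)$ has all of its $\geq k$ neighbors inside $V(K) \cup \{x\}$), so there are exactly two components, of sizes $(k,k)$ or $(k,k+1)$; pushing the degree condition further forces each component together with $x$ to be essentially a clique, placing $F$ into $\cG_2$ when $n = 2k+1$ and $\cG_3$ when $n = 2k+2$.

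So assume $F$ is 2-connected. By the circumference bound of Theorem~\ref{D}, $c(F) \geq \min\{n, 2\delta(F)\} = 2k$. Let $C = v_1 v_2 \cdots v_c v_1$ be a longest cycle of $F$, so $c \in \{2k, 2k+1\}$ and $c < n$, and set $R = V(F) \setminus V(C)$, with $|R| \in \{1,2\}$. Note in particular that $|R| = 2$ forces $n = 2k+2$ and $c = 2k$.

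Suppose $|R| = 1$, and let $u$ be the unique off-cycle vertex with $I = \{i : v_i u \in E(F)\}$, so $|I| \geq k$. A standard P\'osa rotation argument shows that for distinct $i, j \in I$, $v_{i+1} v_{j+1} \notin E(F)$ (else one inserts $u$ into $C$ via the edge $v_{i+1}v_{j+1}$ to produce a cycle of length $c+1$); hence $I^+ := \{v_{i+1} : i \in I\}$ is independent and disjoint from $N_F(u)$. Thus $W := I^+ \cup \{u\}$ is an independent set of size $\geq k+1$. A degree-sum bound on $W$ and its complement pins down the exact sizes and shows that each vertex of $W$ is joined to almost every vertex of $V(F) \setminus W$, placing $F$ into $\cG_4$ when $n = 2k+1$ and $\cG_5$ when $n = 2k+2$.

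The remaining case $|R| = 2$ yields $\cG_1$: letting $u_1, u_2$ be the two off-cycle vertices, parallel P\'osa rotations from each $u_i$, together with the two vertex-disjoint $\{u_1,u_2\}$--$V(C)$ paths supplied by 2-connectivity, force $V(C)$ to partition into two halves of size $k$, each combining with one $u_i$ to form a $(k+1)$-clique, plus at most one additional edge $e_0$ between these two cliques. The main obstacle sits in this last subcase: one must handle both $u_1 u_2 \in E(F)$ and $u_1 u_2 \notin E(F)$, and extract enough forbidden edge configurations from rotation and rerouting to rule out every structure other than exactly $\cG_1$ while still permitting the single optional edge $e_0$. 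This is the bookkeeping-heavy part of the argument, and is precisely the point where the short proof in~\cite{FKL} uses symmetry and closure-style observations to streamline the case analysis.
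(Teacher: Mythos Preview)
The paper does not actually prove Lemma~\ref{5G}; it is quoted from Voss~\cite{Voss} with a pointer to~\cite{FKL} for a short proof, so there is no in-paper argument to compare against. That said, your outline follows the standard route and the low-connectivity reductions to $\cG_1,\cG_2,\cG_3$ are fine.

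There is, however, a genuine error in your 2-connected analysis. You assert that the case $|R|=2$ (so $n=2k+2$, $c=2k$) yields $\cG_1$. But every graph in $\cG_1$ is either disconnected (no $e_0$) or has a bridge $e_0$, hence is never 2-connected; since you are already inside the 2-connected case, you cannot land in $\cG_1$ here. In fact this subcase must produce $\cG_5$ with $e_0$ absent: the model example is $K_{k,k+2}$, which is 2-connected, has $\delta=k$, and has circumference exactly $2k$, so $|R|=2$. Your rotation argument from the two off-cycle vertices $u_1,u_2$ should be aimed at producing a large independent set in $V_2$, not two cliques.

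Relatedly, in the $|R|=1$ subcase with $n=2k+2$ (so $c=2k+1$), your argument only gives an independent set $W=I^+\cup\{u\}$ of size $\geq k+1$, whereas $\cG_5$ requires $|V_2|=k+2$. You need one more vertex in the independent set; this typically comes from exploiting both $I^+$ and $I^-$ together with the observation that $u\notin I^+\cup I^-$, or from a second rotation step. As written, the ``degree-sum bound pins down the sizes'' sentence does not close this gap.
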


Our goal is the following: we assume $F \in \cG_1 \cup \ldots \cup \cG_5$ and show that our matching $\phi$ can be slightly modified so that the corresponding matched pairs form a pancyclic graph. 

%Let $\cA$ be a set of edges from $E(F)$ where $|\cA|=1$ for $F\in \cG_2\cup \cG_3\cup \cG_4$
%and  $|\cA|= 2$ for $F\in \cG_1\cup \cG_5$.
%For a set of edges $\cB$ fom $E(F)$ with $|\cB| = |\cA|$, we say $(\cA, \cB)$ is a {\em swappable pair} if $e_0 \in \cA$ if $e_0$ exists, and

Let $\cA$ and $\cB$ be subsets of $E(F)$. We say $(\cA, \cB)$ is a {\em swappable pair} if $|\cA| = |\cB|$,  $e_0 \in \cA$ if $e_0$ exists, and

${}$\enskip --- \enskip
if $F\in \cG_1$, $\cB$ consists of exactly two disjoint edges joining $V_1$ and $V_2$;
\newline
${}$\enskip --- \enskip
if $F\in \cG_2\cup\cG_3$, $\cB$ consists of exactly one edge $x_1x_2$ joining $V_1\setminus \{ x_0\}$ and $V_2\setminus \{ x_0\}$ (here $x_1\in V_1$ and $x_2 \in V_2$);
\newline
${}$\enskip --- \enskip
if $F\in \cG_4$, $\cB$ consists of exactly one edge contained in $V_2$;
\newline
${}$\enskip --- \enskip
and if $F\in \cG_5$, $\cB$ consists of exactly two distinct edges contained in $V_2$.

\begin{claim} \label{cla:new matching}
There exists a swappable pair $(\cA,\cB)$ such that $\phi'$ is another matching of $B=(E(\cH), E(H))$ from $\cF' = \{f \in E(\cH): f$ is matched in $\phi'\}$ to $F' = \{\phi'(f): f \in \cF'\}$ with $E(F')=\left(E(F)\setminus \cA\right) \cup \cB$.
\end{claim}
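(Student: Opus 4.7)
The plan is to produce $\phi'$ by a local modification of $\phi$ on only a handful of hyperedges. The key enabling observation is that every pair $b \notin E(F)$ has the property that any hyperedge $f \in E(\cH)$ with $b \subseteq f$ is already matched by $\phi$: otherwise, by Claim~\ref{clique}, $b$ would be an edge of $F$. So the simplest way to realize a swap is: for each $b_i \in \cB$, choose distinct hyperedges $f_i \in \cF$ with $b_i \subseteq f_i$, and set $\phi'(f_i) := b_i$ while leaving $\phi$ unchanged on every other hyperedge; then $\cA := \{\phi(f_i)\}$ automatically has the right size and $E(F') = (E(F) \setminus \cA) \cup \cB$, as desired.

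Existence of the hyperedges in each case will follow from the minimum-degree hypothesis $\delta(\cH) \geq \binom{k}{r-1}+1$ by direct counting. In the single-swap classes $\cG_2, \cG_3, \cG_4$ (where $|\cB|=1$ and $e_0$ is absent), it suffices to find one hyperedge meeting both $V_1 \setminus \{x_0\}$ and $V_2 \setminus \{x_0\}$, or containing at least two vertices of $V_2$, respectively. Absence of such a hyperedge would confine every hyperedge through some vertex $v$ (chosen in $V_1 \setminus \{x_0\}$ or in $V_2$) to a fixed $(k+1)$-set, giving $d_\cH(v) \leq \binom{k}{r-1}$ and contradicting the minimum-degree hypothesis. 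For the two-swap classes $\cG_1$ and $\cG_5$, I will apply the same counting twice: after finding the first hyperedge $f_1$, I pick a second vertex outside $f_1$ (possible since the relevant side of the partition has at least $k+2-r \geq 4$ vertices outside any $r$-set) and repeat, using the remaining slack to secure the required disjointness/distinctness of $b_1, b_2$.

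The delicate case is when $e_0$ exists, since the swappable-pair condition then forces one of the rematched hyperedges to be $f_0 := \phi^{-1}(e_0)$. In $\cG_1$ the assumption $r \geq 3$ does the work: $f_0$ meets both $V_1, V_2$, so it contains at least $1 \cdot 2 = 2$ crossing $V_1$-$V_2$ pairs, one of which is distinct from $e_0$ and serves as $b_1$. In $\cG_5$ the obstacle is more serious: if $|f_0 \cap V_2| = 2$, then the only $V_2$-pair inside $f_0$ is $e_0$ itself, so no direct rematching of $f_0$ yields a new $V_2$-edge. I plan to handle this by a \emph{length-one chain swap}: pick an edge $a \subseteq f_0$ with $a \in E(F) \setminus \{e_0\}$, rematch $f_0 \to a$ (this moves $e_0$ out of the matching's image), and then rematch the displaced hyperedge $\phi^{-1}(a)$ to a new $V_2$-pair $b_1$; an independent direct swap then supplies $b_2$. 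Existence of such $a$ and $b_1$ again reduces to a degree count, this time at the two endpoints of $e_0$, and will be the technical core of the argument.
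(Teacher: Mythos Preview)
Your plan coincides with the paper's proof for all the single-swap classes $\cG_2,\cG_3,\cG_4$, for $\cG_1$ (with or without $e_0$), and for $\cG_5$ without $e_0$: the paper also finds the required hyperedges by the degree count (packaged there as the observation that $\cL_v\neq\emptyset$ whenever $d_F(v)=k$), rematches them to the desired pairs $b_i$, and uses the slack $|m_1\cap V_i|\geq 2$ to secure disjointness in $\cG_1$ exactly as you describe.

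The one genuine divergence is $\cG_5$ when $e_0$ exists. Here you have made the problem harder than it is. The swappable-pair definition for $\cG_5$ only asks that $\cB$ consist of two \emph{distinct} edges inside $V_2$; it does not require $\cB\cap E(F)=\emptyset$. Since $e_0$ itself is a $V_2$-edge, the paper simply takes $b_1=e_0$ (so $\phi'(f_0)=e_0=\phi(f_0)$ is left unchanged) and then produces $b_2=y_1y_2$ by one real rematch at some $m_2\in\cL_{y_1}$ with $y_1\in V_2\setminus f_0$. This yields $\cA=\{e_0,\phi(m_2)\}$ and $\cB=\{e_0,y_1y_2\}$, a valid swappable pair with $e_0\in\cA$ and $|\cA|=|\cB|=2$.

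Your proposed length-one chain swap, by contrast, is not obviously salvageable. You need the displaced hyperedge $g=\phi^{-1}(a)$ (for some $a\subseteq f_0$, $a\in E(F)\setminus\{e_0\}$) to contain a $V_2$-pair different from $e_0$, i.e.\ $|g\cap V_2|\geq 2$ and $g\cap V_2\neq\{u,w\}$. But the degree bound $d_\cH(u)\geq\binom{k}{r-1}+1$ only guarantees \emph{one} hyperedge through $u$ meeting $V_2$ in a second vertex, and that hyperedge may well be $f_0$ itself; there is no reason the particular hyperedges $\phi^{-1}(a)$ for pairs $a\subseteq f_0$ should have large intersection with $V_2$. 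So the ``technical core'' you flag is a real gap, and the clean fix is the paper's: allow $b_1=e_0$.
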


\begin{proof}
Recall that for a vertex $v$, $\cL_v$ denotes the set of edges $m\in \cH$ containing a pair $vy$ of $E(H)\setminus E(F)$, and $\cL_v \subseteq \cF$ by Claim~\ref{clique}. If $d_{F}(v)=k$, then $v$ is contained in at most ${k \choose r-1} = {\lfloor (n-1)/2 \rfloor \choose r-1} < \delta(\cH)$ hyperedges which are subsets of $N_F(v) \cup \{v\}$. Thus we have the following.
\begin{equation}\label{Lv}\hbox{If $d_F(v) =k$, then $\cL_v$ is non-empty.}
\end{equation}

First suppose $F\in \cG_1$. Define $\cF_{1,2}$ as the set of hyperedges of $\cF$ meeting both $V_1$ and $V_2$. Note that $\cF_{1,2}$ is nonempty by~\eqref{Lv}. If $e_0$ exists, set $m_1 = \phi^{-1}(e_0)$. Otherwise let $m_1$ be any hyperedge of $\cF_{1,2}$. By symmetry we may suppose that $|m_1\cap V_1|\geq 2$. Choose a vertex $x_2\in V_2\cap m_1$. Since $|m_1| = r \leq k-1$, we can choose another vertex $y_2 \in V_2$ with $y_2 \notin m_1$. Since $y_2$ is not incident to $e_0$ in $F$, $d_F(y_2) = k$. Then we choose an $m_2 \in \cL_{y_2}$. Such a hyperedge must intersect $V_1$ so we can pick a vertex $y_1 \in m_2 \cap V_1$. Lastly, since $|m_1\cap V_1|\geq 2$, we can choose a vertex $x_1 \in m_1 \cap V_1$ such that $x_1 \ne y_1$. We set $(\cA, \cB)= (\{\phi(m_1),\phi(m_2)\},\{x_1x_2,y_1y_2\})$.

If $F\in \cG_2\cup \cG_3$ then choose a vertex $x_1 \in V_1\setminus \{ x_0\}$  and let $m_1\in \cL_{x_1}$. Then $m_1$ must intersect $V_2 \setminus \{x_0\}$. Choose a vertex $x_2 \in m_1 \cap V_2 \setminus \{ x_0\}$. Set $(\cA, \cB) = (\{\phi(m_1)\},\{x_1x_2\})$.
%We have $\cA=\{\phi(m)\}$ and $\cB=\{x_1x_2\}$.
In the special case when $F\in \cG_3$ and $x_0$ has exactly two $F$-neighbors $x_2$ and some $y_2$ in $V_2$ with $\phi(m_1)=x_0y_2$, we have $\{x_1,x_2,x_0,y_2\} \subseteq m_1$. In this case we can choose the vertex $y_2 \in m_1 \cap V_2 \setminus \{ x_0\}$ to take the place of $x_2$ to ensure $x_0$ always has a $F'$-neighbor other than $x_2$ in $V_2$.

If $G\in \cG_4$ then choose a vertex $x_1 \in V_2$ and let $m_1\in \cL_{x_1}$. Then $m$ must contain a vertex $x_2 \in V_2 \setminus \{x_1\}$. We set $(\cA,\cB)=(\{\phi(m_1)\}, \{x_1x_2\})$.

If $G\in \cG_5$ then set $m_1 = \phi^{-1}(e_0)$ if $e_0$ exists, otherwise let $m_1 \in \cL_v$ for any $v \in V_2$. Thus $m_1$ contains at least two vertices $x_1,x_2$ in $V_2$ (if $m_1 = \phi^{-1}(e_0)$, choose $x_1x_2 = e_0$).  Pick a vertex $y_1 \in V_2\setminus m_1$ and let $m_2$ be an arbitrary member of $\cL_{y_1}$. Such a hyperedge $m_2$ must intersect $V_2$ in at least two vertices so we can choose a vertex $y_2 \in m_2 \cap V_2 \setminus \{y_1\}$. Then set $(\cA, \cB) = (\{\phi(m_1), \phi(m_2)\}, \{x_1x_2, y_1y_2\})$.

%Since $r \leq k-1$, we can choose a vertex $y_1\in  V_2\setminus e_0$, $y_1 \notin m_1$. Then we choose an $m_2 \in \cL_{y_1}$. Such edge $m_2$ must intersect $V_2$ so we can choose a vertex $y_2 \in m_2 \cap V_2$. We have $\cA=\{\phi(m_1),\phi(m_2)\}$ and $\cB=\{x_1x_2,y_1y_2\}$.

In each case, we define $\phi'$ starting from $\phi$ by remapping $m_1$ to $x_1x_2$ and $m_2$ to $y_1y_2$.
\end{proof}

The {\em hamiltonian-closure} of an $n$-vertex graph $G$ is the graph $C(G)$ obtained starting from $G$ by iteratively adding edges between pairs of vertices with degree sum at least $n$. We say $G$ is {\em hamiltonian-connected} if for every pair of vertices $x,y \in V(G)$, there is an $x,y$-hamiltonian path in $G$. 

It is well known (see~\cite{BC}) that a $G$ is hamiltonian if and only if $C(G)$ is hamiltonian. Moreover a classical result by P\'osa~\cite{Posa} states that if every pair of non-adjacent vertices $x,y \in V(G)$ satisfy $d(u) + d(v) \geq n+1$, then $G$ is hamiltonian-connected. In particular, the following Corollary will be useful for us. 
\begin{cor}\label{cor:hamconn}
If $e(G)\geq \binom{n}{2}-2$ and $n\geq 6$ then $G$ is hamiltonian-connected.  \qed
\end{cor}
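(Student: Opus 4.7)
The plan is to verify the hypothesis of P\'osa's theorem (stated just before the corollary): every pair of nonadjacent vertices has degree sum at least $n+1$. Since $e(G)\geq \binom{n}{2}-2$, the complement $\overline{G}$ has at most two edges, so every vertex of $G$ has at most two nonneighbors, i.e.\ $d_G(v)\geq n-3$ for all $v$.

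Now fix any nonadjacent pair $x,y\in V(G)$. The nonedge $xy$ already accounts for one edge of $\overline G$, so at most one further nonedge remains. In the worst case that nonedge is incident to one of $\{x,y\}$ (say $y$), giving $d_G(x)=n-2$ and $d_G(y)=n-3$; in every other case the degrees are at least as large. Hence
\[
d_G(x)+d_G(y)\ \geq\ (n-2)+(n-3)\ =\ 2n-5,
\]
which is at least $n+1$ whenever $n\geq 6$. Thus the P\'osa degree-sum condition for hamiltonian-connectivity holds, and we conclude that $G$ is hamiltonian-connected.

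There is essentially no obstacle here: the corollary is a direct numerical check against P\'osa's theorem, and the bound $n\geq 6$ is exactly what is needed to push $2n-5$ past $n+1$. The only mild subtlety is being careful that if $xy$ is the unique nonedge of $G$ then $d_G(x)+d_G(y)=2n-4$, and otherwise the second nonedge either shares at most one endpoint with $xy$ (giving $2n-5$) or is vertex-disjoint from $xy$ (giving $2n-4$); in all cases the required bound holds for $n\geq 6$.
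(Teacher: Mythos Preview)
Your proof is correct and matches the paper's intended argument: the corollary is stated with an immediate \qed, meaning it is regarded as a direct consequence of P\'osa's theorem via exactly the degree-sum check you carried out.
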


Consider the graph $F'$ obtained by swapping the edges in $\cA$ with $\cB$ in $F$, where $|\cA| \leq 2$. We will show that $F'$ is pancyclic, and since $\phi'$ matches edges in $F'$ to hyperedges in $\cH$, $\cH$ will also be pancyclic.
 
 \begin{claim} $F'$ is pancyclic.
 \end{claim}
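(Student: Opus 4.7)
The plan is to reduce the claim to showing $F'$ is hamiltonian and then invoke Theorem~\ref{Brandt}. Because $F'$ differs from $F$ by removing the $\le 2$ edges of $\cA$ and adding the $\le 2$ edges of $\cB$, we have $\delta(F')\ge \delta(F)-2\ge \lfloor (n-1)/2\rfloor-2$, which comfortably exceeds $(n+2)/3$ for $n\ge 70$. Each class $\cG_i$ contains a clique or near-clique on at least $k+1\ge 36$ vertices, so $F'$ retains triangles after the removal of $\cA$; in particular $F'$ has girth $3$ and is nonbipartite. Theorem~\ref{Brandt} then gives that $F'$ is weakly pancyclic with girth $3$, so it suffices to exhibit a hamiltonian cycle in $F'$.

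To establish hamiltonicity of $F'$, I would perform a case analysis on the classes $\cG_1,\dots,\cG_5$. In every case, $F'$ decomposes into a dense subgraph on $V_1$, a dense subgraph on $V_2$, and the edges of $\cB$, which were chosen in Claim~\ref{cla:new matching} precisely to enable concatenating hamiltonian paths across the two sides into a hamiltonian cycle. For $\cG_1$, both $V_1$ and $V_2$ induce $K_{k+1}$ minus at most two edges, so Corollary~\ref{cor:hamconn} produces a hamiltonian path $x_1\leadsto y_1$ in $V_1$ and $y_2\leadsto x_2$ in $V_2$, which close into a hamiltonian cycle via $x_1x_2,y_1y_2\in\cB$. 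Cases $\cG_2$ and $\cG_3$ reduce similarly, using the shared vertex $x_0$ and the single edge $x_1x_2\in\cB$; for $\cG_3$, hamiltonian-connectedness of $V_2$ (minus at most one edge) follows from its minimum degree being at least $k-1\ge (|V_2|+1)/2$ via the Ore-type condition for hamiltonian-connectedness, and the special-case handling in the proof of Claim~\ref{cla:new matching} guarantees $x_0$ still has a $V_2$-neighbor outside $\{x_2\}$, so the hamiltonian path $x_0\leadsto x_2$ exists. In $\cG_4$, the near-complete bipartite structure between $V_1$ and $V_2$ combined with the added edge $x_1x_2$ yields a hamiltonian cycle by alternating across the bipartition and closing with $x_1x_2$. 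For $\cG_5$, contracting the two added $V_2$-edges $x_1x_2$ and $y_1y_2$ to single vertices reduces the problem to finding a hamiltonian cycle in a $K_{k,k}$ with at most two edges missing, which then pulls back to a hamiltonian cycle in $F'$.

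With $F'$ hamiltonian, weakly pancyclic, and of girth $3$, we conclude $F'$ contains cycles of every length from $3$ to $n$, proving it is pancyclic. The main obstacle is the case analysis: $\cG_1,\cG_2,\cG_4$ are clean applications of the Hamilton-connectedness of near-cliques or near-complete bipartite structures, but $\cG_3$ requires carefully invoking hamiltonian-connectedness of $V_2$ with prescribed endpoints $x_0$ and $x_2$ and relying on the adjustment built into Claim~\ref{cla:new matching}, while $\cG_5$ requires producing a hamiltonian cycle that uses both added internal $V_2$-edges to absorb the $|V_2|-|V_1|=2$ excess.
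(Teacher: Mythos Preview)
Your overall strategy matches the paper's: reduce to showing $F'$ is hamiltonian with girth $3$, then invoke Theorem~\ref{Brandt}. The handling of $\cG_1,\cG_2,\cG_4$ is essentially what the paper does.

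There is, however, a genuine gap in your $\cG_3$ argument. You assert that hamiltonian-connectedness of $F'[V_2]$ follows from ``its minimum degree being at least $k-1\ge (|V_2|+1)/2$ via the Ore-type condition.'' This is false for the vertex $x_0$. In $\cG_3$ the only structural guarantee on $x_0$ inside $V_2$ is that $G_2=F[V_2]$ is $2$-connected, so $d_{F[V_2]}(x_0)$ may be as small as $2$; after removing the single edge of $\cA$ it may drop to $1$. (The special-case handling in Claim~\ref{cla:new matching} only ensures $x_0$ retains \emph{one} $V_2$-neighbour other than $x_2$.) Consequently, for any non-neighbour $v$ of $x_0$ in $V_2$ we have $d_{F'[V_2]}(x_0)+d_{F'[V_2]}(v)$ possibly as low as $1+(k-1)=k<|V_2|+1=k+3$, and neither P\'osa's nor Ore's criterion applies. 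The paper circumvents this by adjoining a dummy vertex $x'_2$ adjacent to $x_0$ and $x_2$, then applying the hamiltonian-closure to the high-degree vertices $V_2\setminus\{x_0\}$ (which \emph{do} satisfy $2(k-1)\ge k+3$); once these form a clique in the closure, the two guaranteed neighbours of $x_0$ and of $x'_2$ suffice to complete a hamiltonian cycle in the auxiliary graph, hence an $x_0$--$x_2$ hamiltonian path in $F'[V_2]$.

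Two smaller points. First, your triangle argument (``each class contains a clique or near-clique on $\ge k+1$ vertices'') is not correct for $\cG_4$ and $\cG_5$, where $V_1$ need not induce a clique; the paper instead finds a triangle $\{x',x_1,x_2\}$ using a common $V_1$-neighbour of the newly added $V_2$-edge. Second, in $\cG_5$ the contracted graph can miss up to three (not two) $V_1$--$V_2$ edges, so the paper passes through the closure rather than appealing directly to a near-complete bipartite graph; your sketch is morally right but undercounts slightly.
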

 
 \begin{proof}
By construction, $\delta(F')\geq \delta(F)-2 = k-2\geq \frac{n+2}{3}$. By Theorem \ref{Brandt}, $F'$ is weakly pancyclic. Thus, it is suffices to show $F'$ has both a triangle and a hamiltonian cycle.

If $F \in \cG_1$, let $\cB=\{x_1x_2,y_1y_2\}$ where $x_1,y_1 \in V_1$ and $x_2,y_2 \in V_2$.  We have for $i \in \{1,2\}$, $|E(F'[V_i])|\geq {k+1 \choose 2}-2$. By Mantel's Theorem, $F'[V_i]$ has a triangle and thus $F'$ has a triangle. By Corollary~\ref{cor:hamconn}, each $V_i$ is hamiltonian-connected and hence there exists an $x_1,y_1$-path $P_1$ and an $x_2,y_2$-path $P_2$ with $V(P_i)=V_i$. Thus $F'$ has a hamiltonian cycle, and we're done.

%|V_i|^2/4$ whenever $k>3$. By Mantel's Theorem, $F'[V_i]$ has a triangle. If there exists an $x_1,y_1$-path $P_1$ and a $x_2,y_2$-path $P_2$ of both length $k$ with $V(P_i)=V_i$, then $F'$ has a hamiltonian cycle. By Corollary \ref{cor:hamconn}, such paths exists because $|E(F'[V_i])|\geq {k+1 \choose 2}-2$.

If $F \in \cG_2 \cup \cG_3$, let $\cB=\{x_1x_2\}$ where $x_1\in V_1 \setminus \{x_0\}$ and $x_2 \in V_2 \setminus \{x_0\}$. We have $|E(F'[V_1])|\geq {k+1 \choose 2}-1$. By Mantel's Theorem, $F'[V_1]$ has a triangle and thus $F'$ has a triangle. If there exists an $x_1,x_0$-path $P_1$ and an $x_2,x_0$-path $P_2$ with $V(P_i)=V_i$, then $F'$ has a hamiltonian cycle. By Corollary \ref{cor:hamconn}, such paths exist except possibly for the $x_2,x_0$-path $P_2$ when $F \in \cG_3$. Let $F'_2$ be the graph on $|V_2|+1$ vertices obtained from $F'[V_2]$ by adding a new vertex $x'_2$ and two edges $x_0x'_2$ and $x_2x'_2$. By our choice of vertex $x_2$ stated in the proof of Claim \ref{cla:new matching}, there exists a $y_2 \in V_2$ ($y_2 \ne x_2$) such that $x_0y_2 \in E(F'[V_2])$. If $F'_2$ is hamiltonian, then $F'[V_2]$ has a spanning $x_2,x_0$-path $P_2$, and we're done. Otherwise, it suffices to consider the hamiltonian-closure $C(F'_2)$. Since the degrees of $V_2 \setminus \{x_0\}$ in $F'_2$ are at least $k-1$ and $2(k-1) \geq k+3=|V(F'_2)|$, $C(F'_2)$ is a complete graph on $V_2 \setminus \{x_0\}$ and thus $C(F'_2)$ and $F'_2$ are hamiltonian.

In the case $F \in \cG_5$, note that $F$ misses at most $2$ edges between $V_1$ and $V_2$ with equality only if $e_0$ exists. By construction, $F'$ then misses at most 3 edges between $V_1$ and $V_2$.
If $F \in \cG_5$ (The proofs of the case when $F \in \cG_4$ is easier), let $\cB=\{x_1x_2,y_1y_2\}$ where $x_1x_2, y_1y_2$ are two distinct edges inside $V_2$. We create a new graph $F'_0$ as follows: Delete the edge $x_1x_2$ and merge $x_1$ and $x_2$ into a new vertex  $x$ with $N_{F'_0}(x)=N_{F'}(x_1) \cap N_{F'}(x_2)$; delete the edge $y_1y_2$ and merge $y_1$ and $y_2$ into a new vertex  $y$ with $N_{F'_0}(y)=N_{F'}(y_1) \cap N_{F'}(y_2)$; Observe that $d_{F'_0}(x) \geq k-3$ and thus $x$ has a $F'_0$-neighbor in $V_1$ say $x'$. It is easy to see that $\{x', x_1, x_2\}$ form a triangle in $F'$. 

We create another graph $F'_{00}$ from $F'_0$ by joining all ${k \choose 2}$ pairs in $V_1$ to make $F'_0[V_1]$ a complete graph. If $F'_{00}$ has a hamiltonian cycle then its hamiltonian cycle must only use the edges of $F'_0$ and thus $F'_0$ is hamiltonian as well. Moreover, if $F'_{0}$ contains a hamiltonian cycle, then we can uncontract $x$ and $y$ to get a hamiltonian cycle of $F'$. Hence, it is sufficient to show that $F'_{00}$ is hamiltonian. We consider the hamiltonian-closure $C(F'_{00})$ and observe that for any $u \in F'_{00}[V_1]$ and any $v \in F'_{00}[V_2]$, since $F'_{00}$ misses at most 3 edges between $V_1$ and $V_2$, for $k \geq 5$ we have
\[d_{F'_{00}}(u)+d_{F'_{00}}(v) \geq 2k-1+k-4=3k-5\geq 2k=|V(F'_{00})|.\]
Hence, $C(F'_{00})$ contains the complete bipartite graph $K_{k,k}$. It is easy to see that $C(F'_{00})$ is hamiltonian.
\end{proof}

\subsection{Proof of Theorem~\ref{main} for $r\in \{3,4\}$}

We now finish the proof of Theorem \ref{main} for the remaining cases.
Suppose  $r\in \{3,4\}$, and $\cH$ is an $n$-vertex, $r$-uniform hypergraph with $\delta(\cH) \geq {\lfloor (n-1)/2 \rfloor \choose r-1} + 1$.  By Theorem \ref{KLMthm}, $\cH$ contains a hamiltonian Berge cycle $C$, say $C = v_1,e_1,v_2, \ldots, v_n, e_n, v_1$.

Suppose we have chosen $\phi$ to be a maximal matching of $B=(E(\cH), E(\partial\cH))$ which matches $E(\cH)$ to $E(H)$ such that for $1 \leq i \leq n$, $\phi(e_i) = v_iv_{i+1}$ (with indices taken modulo $n$). As before, we let $F$ be the graph composed of edges in the image of $\phi$. By construction, the image of $C$ in $F$ is a hamiltonian cycle.

Since $F$ also contains a triangle by Claim \ref{triangle}, if we have $\delta(F) \geq \frac{n+2}{3}$, then $F$ and therefore $\cH$ is pancyclic by applying Theorem \ref{Brandt}. So suppose $\delta(F) < (n+2)/3$. Moreover by Theorem~\ref{Bondy}, since $F \neq K_{n/2, n/2}$,
\begin{equation}\label{n4}
    |E(F)| < n^2/4.
\end{equation}

Let $v$ be a vertex with $d(v)< \frac{n+2}{3}$. For $r =4$, we have 
\[{\lfloor (n-1)/2\rfloor \choose 3} +1 \leq \delta(\cH) \leq d_{\cH}(v) \leq |\cN_v| + |\cL_v| < {\frac{n+2}{3} \choose 3} + |E(F)| < {\frac{n+2}{3} \choose 3} + n^2/4.   \]
 This yields a contradiction when $n \geq 70$.

%Now suppose $r \in \{\lfloor (n-1)/2 \rfloor - 3, \lfloor (n-1)/2 \rfloor - 2\}$.
%If there exists a hyperedge $e$ that is not matched in $\phi$, then by Claim \ref{clique}, the hyperedge $e$ induces a clique in $F$. Since $\cH$ is $r$-uniform, we have that every vertex in our induced clique has degree at least $r-1$, including $v$, hence $r-1 \leq d_F(v) \leq \frac {n+1}3$. When $r = \lfloor (n-1)/2\rfloor -3$ this is a contradiction for $n \geq 25$ and when $r = \lfloor (n-1)/2)\rfloor -2$ this is a contradiction for $n\geq 19$.

 %   It follows that every edge of $\cL_v$ is matched. This means that every $f\in E(\cH)$ incident to $v$ gets mapped to an edge in $F$. Recall that $d_\cH(v) \geq \delta(\cH) \geq {\lfloor (n-1)/2\rfloor \choose r-1} + 1$. Since $F$ can have at most ${n \choose 2}$ edges, we have that ${\lfloor (n-1)/2\rfloor \choose r-1} + 1 \leq {n \choose 2}$. For $r = \lfloor (n-1)/2\rfloor - 3$ this is a contradiction for $n \geq 23$ and for $r = \lfloor (n-1)/2 \rfloor -2$ this is a contradiction for all $n \geq 35$.
%
%    Thus we have that $\delta(F) \geq \frac {n+2}3$, a contradiction.

%\subsection{Proof of Theorem \ref{main} for $r=3$}

We now complete the proof for $r =3$. Define $S = \{v: d(v) \leq (n+1)/3\}$ to be the set of ``small" degree vertices. Then set $S'=\{v: d_{F}(v) \leq \frac{n+1}{3}+1\}$.

Suppose also that we chose $\phi$ to be a maximal matching in which $F$ is hamiltonian satisfying the following: (1) $|S|$ is minimized, and (2) subject to (1), $\sum_{v \in S} d_F(v)$ is maximized.

We will show that if $S$ is non-empty, then we can modify $\phi$ (and therefore $F$) to increase the degree of a vertex in $S$ while leaving the other vertices in $S$ unchanged. This would then contradict the choice of $\phi$.

\begin{claim}
If $v \in S'$, then $|\cL_v|\geq \frac{1101n^2}{20000}$.
\end{claim}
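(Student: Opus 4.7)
The plan is essentially arithmetic bookkeeping: lower-bound $|\cL_v|$ by $d_\cH(v) - |\cN_v|$ and show that the resulting expression exceeds $1101 n^2/20000$ whenever $n \geq 70$. There is no combinatorial twist; the only mild care lies in handling the parity of $n$ and confirming that the numerical constant $1101/20000$ really has been calibrated to be tight at $n = 70$.

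First, since $r = 3$, every hyperedge counted by $\cN_v$ has the form $\{v,a,b\}$ with $a, b \in N_F(v)$, so $|\cN_v| \leq \binom{d_F(v)}{2}$. Combined with the hypothesis $d_\cH(v) \geq \binom{\lfloor (n-1)/2 \rfloor}{2} + 1$ and the identity $d_\cH(v) = |\cN_v| + |\cL_v|$, this gives
\[
|\cL_v| \;\geq\; \binom{\lfloor (n-1)/2 \rfloor}{2} + 1 - \binom{d_F(v)}{2}.
\]

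The hypothesis $v \in S'$ yields $d_F(v) \leq (n+1)/3 + 1 = (n+4)/3$, and monotonicity of $x(x-1)/2$ in $x$ gives $\binom{d_F(v)}{2} \leq \tfrac{(n+4)(n+1)}{18}$. The smaller of the two parity branches of $\binom{\lfloor (n-1)/2 \rfloor}{2}$ is the even-$n$ case $\tfrac{(n-2)(n-4)}{8}$, which serves as a uniform lower bound valid for both parities of $n$. Placing everything over a common denominator of $72$, the previous display becomes
\[
|\cL_v| \;\geq\; \frac{9(n-2)(n-4) + 72 - 4(n+4)(n+1)}{72} \;=\; \frac{5n^2 - 74n + 128}{72}.
\]

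The final step is to verify $\tfrac{5n^2 - 74n + 128}{72} \geq \tfrac{1101 n^2}{20000}$. Clearing denominators, this is equivalent to $20728\,n^2 - 1480000\,n + 2560000 \geq 0$, a parabola whose vertex sits near $n \approx 35.7$; it is positive at $n = 70$ (evaluating to $527200$) and monotone increasing thereafter, so the inequality holds on $\{n \geq 70\}$. The main (and rather mild) obstacle is precisely this bookkeeping: the constant $1101/20000$ appears to be pegged so that the bound is essentially tight at the boundary $n = 70$, so one must keep all lower-order terms honestly rather than discard them, and one must use the correct (even-$n$) branch of $\binom{\lfloor(n-1)/2\rfloor}{2}$ as the uniform lower bound.
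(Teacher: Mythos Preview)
Your proof is correct and follows essentially the same approach as the paper: both use $d_\cH(v)=|\cN_v|+|\cL_v|$, the bound $|\cN_v|\le\binom{d_F(v)}{2}\le\binom{(n+4)/3}{2}$, and the minimum-degree hypothesis. The paper phrases this as a one-line contradiction; you simply carry out the same inequality directly and make the arithmetic (including the parity choice for $\lfloor(n-1)/2\rfloor$ and the verification at $n=70$) explicit.
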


\begin{proof}
If not, then
\[{\lfloor (n-1)/2\rfloor \choose 2} +1 \leq \delta(\cH) \leq d_{\cH}(v) \leq |\cN_v| + |\cL_v| < {\frac{n+4}{3} \choose 2} + \frac{1101n^2}{20000}.\]   
This yields a contradiction for $n \geq 70$.
\end{proof}

{\bf Remark}: we chose $\frac{1101}{20000}$ to minimize the bound on $n$ in this claim and the next. One can also use more reasonable constants in exchange for a larger bound on $n$.

For a hyperedge $\{x,y,z\} \in E(\cF)$, call an edge $xy \in E(F)$ a {\bf private edge} of the vertex $z$ if $\phi(\{x,y,z\})=xy$. Since $r=3$, each $xy \in E(F)$ is a private edge of exactly one vertex. 

\begin{claim}\label{4small}
If $F$ is not pancyclic, then $|S'|\leq 4$.    
\end{claim}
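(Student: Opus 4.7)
My plan is a double-counting argument that exploits three facts: for $v \in S'$, $|\cL_v| \geq \tfrac{1101 n^2}{20000}$ from the preceding claim; for $v \in S'$, $d_F(v) \leq \tfrac{n+1}{3}+1 = \tfrac{n+4}{3}$ by the definition of $S'$; and $|E(F)| < \tfrac{n^2}{4}$, which is inequality (\ref{n4}) and is available because $F$ is hamiltonian, contains a triangle by Claim \ref{triangle} (so $F \neq K_{n/2,n/2}$), and is assumed not to be pancyclic.

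Assume for contradiction that $|S'| \geq 5$. For each $v \in S'$ I split $\cL_v = \cL_v^{\mathrm{inc}} \cup \cL_v^{\mathrm{pr}}$ according to whether $v \in \phi(f)$. If $f \in \cL_v^{\mathrm{inc}}$ then $\phi(f) = vx$ for some $x \in N_F(v)$, and since $\phi$ is injective we obtain $|\cL_v^{\mathrm{inc}}| \leq d_F(v) \leq \tfrac{n+4}{3}$, hence $|\cL_v^{\mathrm{pr}}| \geq \tfrac{1101 n^2}{20000} - \tfrac{n+4}{3}$. Each $f \in \cL_v^{\mathrm{pr}}$ is matched to the private edge $\phi(f) = f \setminus \{v\}$, and since (as noted just before the statement of the claim) every edge of $F$ is private to exactly one vertex, the sets $\phi(\cL_v^{\mathrm{pr}})$ for $v \in S'$ are pairwise disjoint subsets of $E(F)$. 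Summing,
\[
|S'|\left(\tfrac{1101 n^2}{20000} - \tfrac{n+4}{3}\right) \;\leq\; \sum_{v \in S'} |\cL_v^{\mathrm{pr}}| \;\leq\; |E(F)| \;<\; \tfrac{n^2}{4}.
\]

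The main obstacle is purely numerical: one must verify that for every $n \geq 70$ the displayed inequality forces $|S'| < 5$. A convenient sufficient condition is $\tfrac{1101 n^2}{20000} - \tfrac{n+4}{3} > \tfrac{n^2}{20}$, i.e.\ $\tfrac{101 n^2}{20000} > \tfrac{n+4}{3}$, which is easily checked at $n = 70$ and only improves with $n$. Once this holds, we conclude $|S'| < 5$, contradicting the assumption, so $|S'| \leq 4$. The constant $\tfrac{1101}{20000}$ in the previous claim was evidently chosen precisely so that this count cuts off at $|S'|=5$ exactly at the threshold $n = 70$.
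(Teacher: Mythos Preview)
Your proof is correct and follows essentially the same approach as the paper's: both count private edges arising from $\cL_v$ for five vertices in $S'$, bound the loss by $d_F(v)\le \tfrac{n+4}{3}$, use disjointness of private-edge sets, and compare the total against $|E(F)|<\tfrac{n^2}{4}$. Your write-up is slightly more explicit in naming the decomposition $\cL_v=\cL_v^{\mathrm{inc}}\cup\cL_v^{\mathrm{pr}}$ and in justifying why $\cL_v\subseteq\cF$, but the underlying argument is identical.
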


\begin{proof}
Suppose $v_1,v_2,v_3,v_4,v_5 \in S'$. We observe that each vertex $v_i$ has at least $|\cL_{v_i}|-\frac{n+4}{3}$ private edges. Since an edge cannot be a private edge for two different vertices, we have
\[5\left(\frac{1101n^2}{20000}-\frac{n+4}{3}\right) \leq \sum_{i=1}^5\left(|\cL_{v_i}|-\frac{n+4}{3}\right) \leq \sum_{i=1}^5 |\{e \in E(F): e \ \mbox{is an private edge of}\ v_i\}| \leq |E(F)|.\]
On the other hand, $|E(F)| < \frac{n^2}{4}$. This yields a contradiction for $n \geq 70$.
\end{proof}

Let $v \in S$. A hyperedge $\{u,v,w\}\in \cL_v$ is said to be {\bf shiftable to $v$} if the following holds:
\begin{enumerate}
    \item $\phi(\{u,v,w\})=uw$ (i.e., $uw$ is a private edge of $v$),
    \item $uw \notin E(C)$, and
    \item $u,w \notin S'$.
\end{enumerate}

\begin{claim} \label{shiftable hyperedge}
If $v \in S$, then there always exists a shiftable hyperedge to $v$.
\end{claim}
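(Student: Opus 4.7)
The plan is to prove existence via a direct count: show that among the many hyperedges of $\cL_v$, only few can fail the three shiftability conditions. The two key inputs are the bound $|\cL_v| \geq \frac{1101n^2}{20000}$ from the previous claim (valid since $v \in S \subseteq S'$) and the bound $|S'| \leq 4$ from Claim~\ref{4small} (valid since $F$ is not pancyclic, else $\cH$ would be pancyclic via $\phi$).

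First I would show that most hyperedges of $\cL_v$ already satisfy condition~(1). By Claim~\ref{clique}, $\cL_v \subseteq \cF$, so each $f=\{u,v,w\}\in \cL_v$ is matched by $\phi$ to one of the pairs $uv$, $vw$, or $uw$, and only the last option makes $uw$ a private edge of $v$. Since $\phi$ is injective and $F$ has only $d_F(v)\leq (n+1)/3$ edges incident to $v$, at most $d_F(v)$ members of $\cL_v$ can be matched to an edge through $v$. Hence at least
\[
|\cL_v|-d_F(v)\ \geq\ \frac{1101n^2}{20000}-\frac{n+1}{3}
\]
hyperedges of $\cL_v$ are matched to a private edge of $v$.

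Next I would subtract those violating condition~(2) or (3). Violators of (2) have their private edge on the Berge hamiltonian cycle $C$, contributing at most $|E(C)|=n$ edges. Violators of (3) have private edge incident to some $u \in S'$; combining $|S'|\leq 4$ with $d_F(u)\leq (n+1)/3+1$ for $u\in S'$, the number of $F$-edges touching $S'$ is at most $\sum_{u \in S'} d_F(u) \leq 4\bigl((n+1)/3+1\bigr)=\frac{4(n+4)}{3}$. Combining, the number of shiftable hyperedges to $v$ is at least
\[
\frac{1101n^2}{20000}-\frac{n+1}{3}-n-\frac{4(n+4)}{3}\ =\ \frac{1101n^2}{20000}-\frac{8n+17}{3},
\]
which is strictly positive for $n\geq 70$.

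The conceptual content is a straightforward union-bound count on private edges of $v$, so I anticipate no real difficulty. The only obstacle is arithmetic: the constant $\frac{1101}{20000}$ from the previous claim was calibrated precisely so that both Claim~\ref{4small} and the displayed inequality above hold simultaneously at the threshold $n=70$, so the bound is tight but achievable without extra work.
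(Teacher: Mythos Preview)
Your proposal is correct and follows essentially the same union-bound approach as the paper: subtract from $|\cL_v|$ the at most $d_F(v)\leq (n+1)/3$ hyperedges not matched to a private edge of $v$, the at most $n$ matched to an edge of $C$, and the at most $4(n+4)/3$ whose private edge touches $S'$. Your write-up is in fact slightly more careful than the paper's (explicitly invoking Claim~\ref{clique} for $\cL_v\subseteq\cF$ and explaining why injectivity of $\phi$ lets you bound by $\sum_{u\in S'}d_F(u)$), but the argument and the final numerical bound are identical.
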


\begin{proof}
Let $v \in S$. We observe that out of the hyperedges in $\cL_v$, at most $d_F(v) \leq \frac{n+1}{3}$ of them do not match to private edges of $v$, at most $n$ of them are matched to the edges on the hamiltonian cycle $C$, and at most $4\left(\frac{n+4}{3}\right)$ of them contain another vertex in $S'$ by Claim~\ref{4small}. Hence, there are always at least 
\[|\cL_v|-\frac{n+1}{3}-n-4\left(\frac{n+4}{3}\right)\geq \frac{1101n^2}{20000}-\frac{n+1}{3}-n-4\left(\frac{n+4}{3}\right)>0\]  
shiftable hyperedges to $v$.
\end{proof}

Take a vertex $v \in S$. By Claim \ref{shiftable hyperedge}, there exists a hyperedge say $\{u,v,w\}\in \cL_v$ that is shiftable to $v$. Without loss of generality, say $vw \notin E(F)$. We define a new matching $\phi'$ of $B$ starting from $\phi$ by remapping $\phi'(\{u,v,w\}) = vw$. In the corresponding graph $F'$ of the image of $\phi'$, we have that $d_{F'}(v) = d_F(v) + 1$, $d_{F'}(u) = d_F(u) - 1 > (n+1)/3$, and $d_{F'}(x) = d_F(x)$ for all other vertices $x \in V(F')$. Moreover, $F'$ still contains the hamiltonian cycle given by the image of $C$. We can then extend $\phi'$, if necessary, to a maximal matching of $B$. This contradicts the choice of $\phi$ by rules (1) and (2). \qed

\section{Concluding Remarks}

\begin{enumerate}
    \item The ``exceptional graph" in Theorem~\ref{Bondy}, $K_{n/2, n/2}$ is hamiltonian but not pancyclic. Clearly, bipartite graphs are not pancyclic because they avoid all odd cycles. In contrast, the sharpness examples for Theorem~\ref{main}, Constructions 1 and 2, are not pancyclic hypergraphs because they do not contain hamiltonian Berge cycles. In general, it seems difficult to construct hamiltonian $r$-uniform hypergraphs for $r \geq 3$ that are {\em not} pancyclic. For instance the tight cycle, which is a hamiltonian hypergraph with the minimum number of edges, also is pancyclic. That being said, the following is a hamiltonian construction for $3$-uniform hypergraphs which avoids even length cycles from $4$ to $\frac {n-2}2$.

    {\bf Construction 3:} For an integer $k\geq 5$, let $\cH_{2k} = (V,E)$ where $V = \{0, 1, \ldots, 2k-1\}$ and $E = \{\{i, i+1, i+k\} : i \in V\}$ where our vertices are taken modulo $2k$.
    
    For reference $\cH_{12}$ is included below:

    \begin{center}
        \includegraphics[scale=.45]{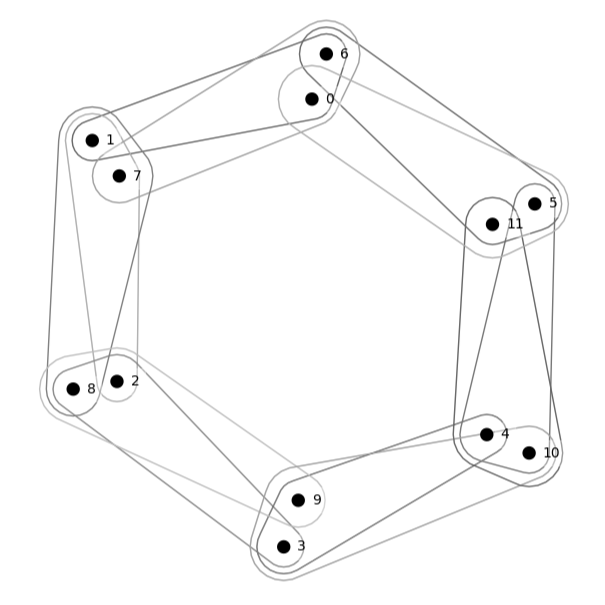}
    \end{center}

Observe that the vertex set of $\cH_{2k}$ can be partitioned $k$ parts $V_1, \ldots, V_k$ where $V_i = \{j \in V(\cH_n): j \equiv i \mod k\}$, where $|V_i| = 2$. By construction a vertex $j$ is adjacent to vertices which differ by $0$ or $1$ modulo $k$. 

Suppose $\cH_{n}$ contains a Berge cycle of length $4$. We consider the $V_i$'s containing each of the $4$ vertices in the Berge cycle. Without loss of generality, we may assume that the Berge cycle has one of the following structures:   $V_0V_0V_1V_1$; $V_0V_1 V_0 V_1$; $V_0V_1V_2V_1$.

There are exactly two edges containing either two vertices in $V_0$ or a vertex in $V_0$ and a vertex in $V_1$. One of these edges must be used to connect the first and last vertex of the Berge cycle. Thus we do not have enough such edges to have structure $V_0V_0V_1V_1$ or $V_0V_1V_0V_1$. The structure $V_0V_1V_2V_1$ is not possible because only one edge contains a fixed vertex in $V_2$ and a vertex in $V_1$.

One can also show, e.g., using induction, that $H_{2k}$ does not contain any even Berge cycles with lengths between $4$ and $k-1$.

    {\bf Remark:} This construction gives us a collection of $3$-uniform Berge hamiltonian hypergraphs which are not pancyclic, however so far there is no clear generalization of this construction for hypergraphs of higher uniformity. A remaining question is to construct $r$-uniform Berge hamiltonian hypergraphs which are not pancyclic for $r \geq 4$. 
    \item The main case in our proof of Theorem~\ref{main} in which we consider $r\geq 5$ is independent of Theorem~\ref{KLMthm} and reduces to analyzing dense nonhamiltonian graphs. It would be interesting if the remaining cases could also be proved without applying Theorem~\ref{KLMthm}. 
\end{enumerate}


\begin{thebibliography}{24}

\bibitem{Bondy} J.A Bondy, Pancyclic graphs I,
{\it Journal of Combinatorial Theory, Series B},
{\bf 11} (1) (1971), 80-84.

\bibitem{BC} J.A. Bondy and V. Chvatal, A method in graph theory,
{\it Discrete Mathematics},
{\bf 15} (2) (1976), 111-135.


\bibitem{Brandt} S. Brandt, Sufficient conditions for graphs to contain all subgraphs of a given type, PhD thesis, Freie Universit\"at Berlin, 1994.

\bibitem{BFG} Brandt, Stephan and Faudree, Ralph and Goddard, Wayne, Weakly pancyclic graphs,
{\it Journal of Graph Theory},
{\bf 27} (3) (1998), 141-176.

\bibitem{Coulson} Coulson, Matthew and Perarnau, Guillem, A Rainbow Dirac's Theorem,
{\it SIAM Journal on Discrete Mathematics},
{\bf 34} (3) (2020), 1670-1692.

\bibitem{Cream} Megan Cream and Ronald J. Gould and Kazuhide Hirohata, A note on extending Bondy's meta-conjecture,
{\it Australas. J Comb.},
{\bf 67} (2017), 463-469.

\bibitem{Dirac} Dirac, G. A., Some Theorems on Abstract Graphs,
{\it Proceedings of the London Mathematical Society},
{\bf s3-2} (1) (1952), 69-81.

\bibitem{Draganic} Nemanja Dragani'c and David Munh'a Correia and Benny Sudakov, A generalization of Bondy’s pancyclicity theorem,
{\it Combinatorics, Probability and Computing},
{\bf 33} (5) (2024), 554-563.

\bibitem{FKL} Z. F\"uredi and A. Kostochka and R. Luo, Berge Cycles in non-uniform hypergraphs,
{\it The Electronic Joural of Combinatorics},
{\bf 27} (3) (2020).

\bibitem{Gyori_Lemons} Győri, Ervin and Lemons, Nathan, Hypergraphs with No Cycle of a Given Length,
{\it Combinatorics, Probability and Computing},
{\bf 21} (1-2) (2012), 193–201.


\bibitem{Gyori} Győri, Ervin Győri and Nathan Lemons and Nika Salia and Oscar Zamora, The structure of hypergraphs without long Berge cycles,
{\it Journal of Combinatorial Theory, Series B},
{\bf 148} (2021), 239-250.

\bibitem{Jung} H.A. Jung, On Maximal Circuits in Finite Graphs,
{\it Annals of Discrete Mathematics},
{\bf 3} (1978), 129-144.

\bibitem{JungNara} Jung, HA and Nara, C, Note on 2-connected graphs with $d (u)+ d (v) \geq n- 4$,
{\it Archiv der Mathematik},
{\bf 39} (1982), 383--384.

\bibitem{KL} Alexandr Kostochka and Ruth Luo, On r-uniform hypergraphs with circumference less than r,
{\it Discrete Applied Mathematics},
{\bf 276} (2020), 69-91.

\bibitem{KLM2} Alexandr Kostochka and Ruth Luo and Grace McCourt, A Hypergraph Analog of Dirac{\textquoteright}s Theorem for Long Cycles in 2-Connected Graphs,
{\it Combinatorica},
{\bf 44} (4) (2024), 849--880.

\bibitem{KLM} Alexandr Kostochka and Ruth Luo and Grace McCourt, Dirac's Theorem for hamiltonian Berge cycles in uniform hypergraphs, arXiv: 2109.12637, 2022.

\bibitem{Letzter} Shoham Letzter, Pancyclicity of highly connected graphs, arXiv:2306.12579, 2023.

\bibitem{Lu} Linyuan Lu and Zhiyu Wang, On Hamiltonian Berge cycles in [3]-uniform hypergraphs,
{\it Discrete Mathematics},
{\bf 344} (8) (2021), 112462.

\bibitem{MHG} Ma, Yue and Hou, Xinmin and Gao, Jun, A Dirac-Type Theorem for Uniform Hypergraphs,
{\it Graphs and Combinatorics},
{\bf 40} (76) (2024).

\bibitem{Mantel} Mantel, Willem, Vraagstuk xxviii,
{\it Wiskundige Opgaven met de Oplossingen},
{\bf 10} (2) (1907), 60--1.

\bibitem{Ore} O. Ore, Note on Hamilton Circuits,
{\it The American Mathematical Monthly},
{\bf 67} (1) (1960), 55--55.

\bibitem{Posa} Posa, Lajos, A theorem concerning Hamilton lines,
{\it A Magyar Tudomanyos Akademia Matematikai Kutat{\'o} Intezetenek K{\"o}zlem{\'e}nyei},
{\bf 7} (1-2) (1962), 225--226.

\bibitem{Schmeichel} E.F Schmeichel and S.L Hakimi, Pancyclic graphs and a conjecture of Bondy and Chvátal,
{\it Journal of Combinatorial Theory, Series B},
{\bf 17} (1) (1974), 22-34.


\bibitem{Voss} H.-J. Voss, Maximal circuits and paths in graphs. Extreme Cases,
{\it \emph{Combinatorics} 5th Hungarian Colloquium, Kesthely, June/July 1976, Proceedings},
{\bf II} (1976), 1099 -- 1122.
 


\end{thebibliography}
\end{document}